\documentclass[11pt]{amsart}
\usepackage{a4,latexsym,amssymb,amsmath,amsfonts}
\newtheorem{theo}{Theorem}
\newtheorem{lem}[theo]{Lemma}
\newtheorem{prop}[theo]{Proposition}
\newtheorem{cor}[theo]{Corollary}

\newtheorem{assum}[theo]{Assumption}
\newtheorem{rem}[theo]{Remark}
\newtheorem{exa}[theo]{Example}

\numberwithin{equation}{section}
\numberwithin{theo}{section}
\def\linie{\vrule height 14pt depth 5pt}
\def\back{\noalign{\vskip-3pt}}
\title{Matrix methods for wave equations}
\author{Delio Mugnolo}
\address{Abteilung Angewandte Analysis der Universit\"at, Helmholtzstra\ss{e} 18, D-89081 Ulm, Germany and Dipartimento di Matematica dell'Universit\`a degli Studi, Via Orabona 4, I-70125 Bari, Italy}
\email{delio.mugnolo@uni-ulm.de}
\subjclass[2000]{47D09, 35L05, 35L20}

\begin{document}
\maketitle

\begin{abstract}
In analogy to a characterisation of operator matrices generating $C_0$-semigroups due to R. Nagel (\cite{[Na89]}), we give conditions on its entries in order that a $2\times 2$ operator matrix generates a cosine operator function. We apply this to systems of wave equations, to second order initial-boundary value problems, and to overdamped wave equations.
\end{abstract}

\maketitle

\section{Introduction}\label{intro}

In~\cite{[Na89]}, R. Nagel started a systematic matrix theory for unbounded operators on Banach spaces. In particular, he described under which assumptions on the entries $A$, $H$, and $D$ an operator matrix \begin{equation*}
\begin{pmatrix}
A & H\\
0 & D
\end{pmatrix}
\qquad\hbox{with diagonal domain}\qquad
D(A)\times D(D)
\end{equation*}
generates a $C_0$-semigroup on a suitable product space. However, the theory presented in~\cite{[Na89]} only accounts for first order problems. In other words, the generation of cosine operator functions is not an issue there.

\bigskip
After briefly recalling in Section~\ref{cof} some known results on cosine operator functions, we state in Section~\ref{main} our main results. In analogy to the theory developed in~\cite[\S~3]{[Na89]}, we characterize when an operator matrix with diagonal domain generates a cosine operator function. In the remainder of our paper, we systematically exploit this abstract result to tackle concrete wave equations of different kinds.

\bigskip
The easiest application is to systems of wave equations, possibly on different underlying spaces. In Section~\ref{systems} we show that the well-posedness of the initial value problem associated with a system of $n$ uncoupled oscillators is not affected by the introduction of coupling terms, provided that the operators modelling such terms are not too unbounded. As a nontrivial application, we consider in Example~\ref{strom} an operator arising in fluid dynamics and already discussed by G. Str\"ohmer in~\cite{[St87]} and also, in a slightly modified setting, by Nagel in~\cite[\S~4]{[Na89]}. We show that in a $L^2$-setting Str\"ohmer's and Nagel's results can be essentially improved.

\bigskip
In Section~\ref{aibvp} we consider second order abstract initial-boundary value problems equipped with dynamic boundary conditions. Such a topic 
has aroused vast interest in recent years: we refer, e.g., to~\cite{[FGGR01]}, 
\cite{[BE04]},~\cite{[CENP05]}, and~\cite{[Mu05]}. 
The results of Section~\ref{main} allow to discuss the well-posedness of a class of wave equations with dynamical boundary conditions larger than that considered in~\cite[\S~3]{[Mu05]}. This in turn allows to improve in Example~\ref{cennorig} some results obtained by Casarino et al. in~\cite[\S~3]{[CENN03]}.

\bigskip
Finally, in Section~\ref{damped} we consider a class of second order complete abstract Cauchy problems and give a criterion for their well-posedness.
Our assumptions are in fact stronger than those imposed e.g. in~\cite[\S~2.5]{[XL98]} and~\cite[\S~VI.3b]{[EN00]}, but in this way we are able to enlarge the space on which such problems are well-posed.

\section{Basic facts on cosine operator functions}\label{cof}


Given a closed operator $A$ on a Banach space $X$, we denote by $[D(A)]$ the Banach space obtained by endowing the domain of $A$ with its graph norm. We assume the reader to be familiar with the theory of cosine operator functions as presented, e.g., in~\cite{[Fa85]} or~\cite[\S~3.14]{[ABHN01]}, and only recall the following, cf.~\cite[Theorem~3.14.11, Theorem~3.14.17, and Theorem~3.14.18]{[ABHN01]}. (If $A$ generates a cosine operator function (COF), we denote it by $(C(t,A))_{t\in\mathbb R}$, and the associated sine operator function (SOF) by $(S(t,A))_{t\in\mathbb R}$).

\begin{lem}\label{wellp2char}
Let $A$ be a closed operator on a Banach space $X$. Then the operator $A$ generates a COF on $X$ if and only if there exists a Banach space $V$, with $[D(A)]\hookrightarrow V\hookrightarrow X$, such that the operator matrix 
\begin{equation*}
{\mathbf A}:=
\begin{pmatrix}
0 & I_V\\
A & 0
\end{pmatrix},\qquad D({\mathbf A}):=D(A)\times V,
\end{equation*}
generates a $C_0$-semigroup $(e^{t\mathbf A})_{t\geq 0}$ in $V\times X$. In this case there holds
\begin{equation}\label{groupcos}
e^{t{\mathbf A}}=
\begin{pmatrix}
C(t,A) & S(t,A)\\
AS(t,A) & C(t,A)
\end{pmatrix},\qquad t\geq 0.
\end{equation}
If such a space $V$ exists, then it is unique and is called \emph{Kisy\'nski space associated with $(C(t,A))_{t\in{\mathbb R}}$}. The (unique) product space ${\bf X}=V\times X$ is called \emph{phase space associated with} $(C(t,A))_{t\in{\mathbb R}}$ (or \emph{with $A$}).
\end{lem}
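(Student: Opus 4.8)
The plan is to prove the two implications separately and then settle uniqueness, throughout exploiting the reflection symmetry of $\mathbf A$ and an intrinsic description of the Kisy\'nski space.

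\textbf{Sufficiency.} Suppose $\mathbf A$ generates a $C_0$-semigroup on $V\times X$. I would first introduce $J:=\mathrm{diag}(I_V,-I_X)$ and observe that $J\mathbf A J=-\mathbf A$ on $D(\mathbf A)=D(A)\times V$; hence $-\mathbf A$ generates the conjugated semigroup $(Je^{t\mathbf A}J)_{t\geq0}$, so that $\mathbf A$ in fact generates a $C_0$-group. Writing $e^{t\mathbf A}=(T_{ij}(t))_{i,j=1,2}$, the identity $Je^{t\mathbf A}J=e^{-t\mathbf A}$ forces the parities $T_{11}(-t)=T_{11}(t)$, $T_{22}(-t)=T_{22}(t)$, $T_{12}(-t)=-T_{12}(t)$, and $T_{21}(-t)=-T_{21}(t)$. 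Setting $C(t):=T_{22}(t)$, a strongly continuous family on $X$ with $C(0)=I_X$, I would compare the $(2,2)$-entries of $e^{(t+s)\mathbf A}=e^{t\mathbf A}e^{s\mathbf A}$ and of $e^{(t-s)\mathbf A}=e^{t\mathbf A}\bigl(Je^{s\mathbf A}J\bigr)$: the off-diagonal products $T_{21}(t)T_{12}(s)$ cancel by the parities above, yielding d'Alembert's equation $C(t+s)+C(t-s)=2C(t)C(s)$, so that $C(\cdot)$ is a COF. To identify its generator with $A$, I would express $(\lambda-\mathbf A)^{-1}$ through $(\lambda^2-A)^{-1}$ and match Laplace transforms, so that $\lambda(\lambda^2-A)^{-1}=\int_0^\infty e^{-\lambda t}C(t)\,dt$ pins down $A$ as the generator.

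\textbf{Necessity.} Suppose now that $A$ generates a COF $(C(t,A))$ with SOF $(S(t,A))$, and recall the standard identities $\tfrac{d}{dt}S(t,A)=C(t,A)$ and $C(t,A)x-x=A\int_0^t S(s,A)x\,ds$, with $C$ and $S$ strongly continuous. I would define the Kisy\'nski space intrinsically by
\[
V:=\{x\in X : C(\cdot,A)x\in C^1(\mathbb R;X)\},
\]
equipped with a graph-type norm (for instance $\|x\|_X$ augmented by a bound on $\tfrac{d}{dt}C(t,A)x$ over $t\in[0,1]$) under which it is complete. I would then verify the continuous embeddings $[D(A)]\hookrightarrow V\hookrightarrow X$, the invariances $C(t,A)V\subseteq V$ and $S(t,A)X\subseteq V$, and the boundedness and strong continuity of $AS(t,A)\colon V\to X$. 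With these in hand the family \eqref{groupcos} is a $C_0$-semigroup on $V\times X$, and differentiating it at $t=0$ on the core $D(A)\times V$ identifies its generator as $\mathbf A$ with the diagonal domain $D(A)\times V$.

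\textbf{Uniqueness and main obstacle.} Uniqueness of the phase space follows from the intrinsic description: since the defining condition for $V$ refers only to the COF and to no a priori chosen intermediate space, any two spaces rendering $\mathbf A$ a $C_0$-semigroup generator must coincide, the closed graph theorem forcing their norms to be equivalent. I expect the main obstacle to lie in the necessity part, specifically in showing that $AS(t,A)$ extends to a bounded, strongly continuous operator from $V$ into $X$ and that the resulting matrix family is strongly continuous with generator exactly $\mathbf A$ on the diagonal domain; this is where the precise choice of norm on $V$ and the regularising properties of $S(t,A)$ must be used carefully. By contrast, the symmetry argument and the two resolvent-based generator identifications are comparatively routine once $(\lambda-\mathbf A)^{-1}$ is expressed through $(\lambda^2-A)^{-1}$.
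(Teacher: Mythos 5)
The paper does not actually prove this lemma: it is recalled from \cite{[ABHN01]} (Theorems 3.14.11, 3.14.17, and 3.14.18), so your attempt has to be measured against those nontrivial results rather than against an argument in the text. Your sufficiency direction (semigroup $\Rightarrow$ COF) is correct and essentially complete: the conjugation $J\mathbf{A}J=-\mathbf{A}$ with $J=\mathrm{diag}(I_V,-I_X)$ does upgrade the semigroup to a group, the resulting parities make the cross terms cancel so that the $(2,2)$-entry satisfies d'Alembert's equation, and the resolvent computation giving $\bigl((\lambda-\mathbf{A})^{-1}\bigr)_{22}=\lambda(\lambda^2-A)^{-1}$ identifies $A$ as the generator via Laplace transforms. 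That is a clean, self-contained argument for one implication.

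The gaps lie in the other two parts, which is exactly where the mathematical content of the lemma sits. First, your necessity direction is a plan, not a proof: taking $V:=\{x\in X: C(\cdot,A)x\in C^1(\mathbb{R};X)\}$ with the norm $\|x\|_X+\sup_{t\in[0,1]}\|\tfrac{d}{dt}C(t,A)x\|_X$ is the right starting point, but every item you list as ``to be verified'' --- completeness of $V$, the inclusion $S(t,A)X\subseteq V$ with local uniform boundedness and strong continuity in the $V$-norm, $S(t,A)V\subseteq D(A)$ with $AS(t,A)\in{\mathcal L}(V,X)$ strongly continuous, the semigroup law for the matrix family, and the identification of the generator as $\mathbf{A}$ with domain exactly $D(A)\times V$ --- is the substance of Kisy\'nski's theorem. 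You correctly flag this as the main obstacle, but you do not carry out a single one of these steps, so the implication is not established. Second, and more seriously, the uniqueness argument is circular. Possessing an intrinsic candidate $V$ does not by itself show that an arbitrary Banach space $W$ for which $\mathbf{A}$ (with domain $D(A)\times W$) generates a $C_0$-semigroup on $W\times X$ coincides with $V$. One inclusion, $W\subseteq V$, can genuinely be extracted from your sufficiency analysis: the $(2,1)$-entry $T_{21}(t)\in{\mathcal L}(W,X)$ satisfies $C(t,A)x-x=\int_0^t T_{21}(s)x\,ds$ for $x\in W$ (compare Laplace transforms), so $C(\cdot,A)x$ is $C^1$ and $\sup_{t\in[0,1]}\|\tfrac{d}{dt}C(t,A)x\|_X\leq M\|x\|_W$. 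But the reverse inclusion $V\subseteq W$ --- that a phase space cannot be strictly smaller than the Kisy\'nski space --- requires a genuinely different argument; this is precisely Kisy\'nski's uniqueness theorem, cited in the paper as \cite{[ABHN01]}, Theorem 3.14.18. Your appeal to the closed graph theorem only converts set equality into equivalence of norms (both spaces embed into $X$, so the inclusion has closed graph); it cannot produce the set equality in the first place. As it stands, the proposal proves one implication and gestures at the other implication and at uniqueness.
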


\begin{lem}\label{analy}
If $A$ generates a COF on a Banach space $X$, then it also generates an analytic semigroup of angle $\frac{\pi}{2}$ on $X$. Further, the spectrum of $A$ lies inside a parabola.
\end{lem}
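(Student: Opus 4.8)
The plan is to derive both assertions from a single computation, namely the Laplace transform linking the cosine operator function to the resolvent of $A$, combined with the substitution $\mu = \lambda^2$. First I would invoke the elementary exponential bound $\|C(t,A)\| \le M\mathrm e^{\omega|t|}$ ($t\in\mathbb R$), valid for every COF (and with $\omega>0$ after enlarging if necessary), together with the generation identity
\begin{equation*}
\lambda\,(\lambda^2 - A)^{-1} = \int_0^\infty \mathrm e^{-\lambda t}\,C(t,A)\,\mathrm dt,\qquad \operatorname{Re}\lambda > \omega,
\end{equation*}
taken from the theory recalled above. Estimating the integral immediately gives
\begin{equation*}
\bigl\|(\lambda^2 - A)^{-1}\bigr\| \le \frac{M}{|\lambda|\,(\operatorname{Re}\lambda - \omega)},\qquad \operatorname{Re}\lambda > \omega .
\end{equation*}

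I would then pass to the spectral variable $\mu := \lambda^2$. The image of the half-plane $\{\operatorname{Re}\lambda > \omega\}$ under $\lambda\mapsto\lambda^2$ is precisely the region lying to the right of the leftward-opening parabola described in the $\mu$-plane by $\operatorname{Re}\mu = \omega^2 - (\operatorname{Im}\mu)^2/(4\omega^2)$, the image of the boundary line $\operatorname{Re}\lambda = \omega$. Since each such $\mu$ belongs to $\rho(A)$, the spectrum of $A$ lies inside this parabola; this is the second assertion.

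For the first assertion I would re-express the resolvent bound in terms of $\mu$. Writing $\mu = |\mu|\mathrm e^{\mathrm i\psi}$ and taking the principal square root $\lambda = |\mu|^{1/2}\mathrm e^{\mathrm i\psi/2}$, one has $|\lambda| = |\mu|^{1/2}$ and $\operatorname{Re}\lambda = |\mu|^{1/2}\cos(\psi/2)$. Fix $\varepsilon\in(0,\pi/2)$ and restrict $\mu$ to a sector $\mu_0 + \{z\neq 0 : |\arg z| < \pi - \varepsilon\}$, with $\mu_0>0$ chosen so large that the whole sector lies in the exterior of the parabola. Then $|\psi| < \pi - \varepsilon$, so $\psi/2$ stays away from $\pm\pi/2$, $\cos(\psi/2)$ is bounded below, and for $|\mu|$ large the displayed estimate turns into
\begin{equation*}
\bigl\|(\mu - A)^{-1}\bigr\| \le \frac{C_\varepsilon}{|\mu|} .
\end{equation*}
Letting $\varepsilon$ decrease to $0$, this is exactly the resolvent characterisation of the generator of an analytic semigroup of angle $\frac{\pi}{2}$, and the first assertion follows.

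The main obstacle is the borderline value of the angle: one must push the resolvent estimate up to sectors of half-opening arbitrarily close to $\pi$ in the $\mu$-plane, that is, to $\psi/2\to\pm\pi/2$ where $\cos(\psi/2)\to 0$. This is why $\varepsilon$ must be kept strictly positive to secure a lower bound on $\cos(\psi/2)$, the limit $\varepsilon\to 0$ being taken only at the end. A second point to check is the correct choice of the shift $\mu_0$, so that the sector genuinely avoids the parabola: here one uses the quantitative observation that, relative to a ray issuing from $\mu_0$, the parabola recedes to the left like the square of the height, which forces the spectrum into an arbitrarily thin cone around the negative real direction as $\mu_0\to\infty$.
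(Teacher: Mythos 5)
Your argument is correct and is essentially the standard proof of this fact: the paper itself gives no proof but cites \cite[Theorem~3.14.17]{[ABHN01]}, whose argument is precisely your route --- the Laplace-transform identity $\lambda(\lambda^2-A)^{-1}=\int_0^\infty e^{-\lambda t}C(t,A)\,dt$, the resulting bound $\|(\lambda^2-A)^{-1}\|\le M/\bigl(|\lambda|(\operatorname{Re}\lambda-\omega)\bigr)$, and the substitution $\mu=\lambda^2$ mapping the half-plane $\{\operatorname{Re}\lambda>\omega\}$ onto the exterior of the parabola, yielding both the spectral inclusion and the sectorial estimates of half-angle $\pi-\varepsilon$ for every $\varepsilon>0$. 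The only details left implicit --- converting $\|(\mu-A)^{-1}\|\le C_\varepsilon/|\mu|$ on the shifted sector into the bound against $|\mu-\mu_0|^{-1}$ required by the generation theorem, and noting that a COF generator is densely defined so that the resulting analytic semigroup is strongly continuous --- are routine.
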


The following similarity and perturbation results have been proved in~\cite[\S~3.14]{[ABHN01]} and~\cite[\S~2]{[Mu05]}.

\begin{lem}\label{similcos}
Let $V_1,V_2,X_1,X_2$ be Banach spaces with $V_1\hookrightarrow X_1$ and $V_2\hookrightarrow X_2$, and let $U$ be an isomorphism from $V_1$ onto $V_2$ and from $X_1$ onto $X_2$. Then an operator $A$ generates a COF with associated phase space $V_1\times X_1$ if and only if $UAU^{-1}$ generates a COF with associated phase space $V_2\times X_2$. In this case, there holds 
\begin{equation*}
U C(t,A) U^{-1}= C(t, UAU^{-1}),\qquad t\in{\mathbb R}.
\end{equation*}
\end{lem}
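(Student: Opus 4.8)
The plan is to reduce the similarity of cosine operator functions to the well-known similarity of $C_0$-semigroups, using the first-order characterisation of Lemma~\ref{wellp2char}. The bridge is the diagonal operator $\mathbf{U}:=\begin{pmatrix} U & 0\\ 0 & U\end{pmatrix}$: since $U$ is an isomorphism from $V_1$ onto $V_2$ and from $X_1$ onto $X_2$, $\mathbf{U}$ is an isomorphism from the phase space $V_1\times X_1$ onto $V_2\times X_2$. As a preliminary I would record that the embedding chain $[D(UAU^{-1})]\hookrightarrow V_2\hookrightarrow X_2$ holds: the continuity of $V_2\hookrightarrow X_2$ is inherited from $V_1\hookrightarrow X_1$ through $U$, while $D(UAU^{-1})=U(D(A))$ by definition of the conjugate operator.

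First I would assume that $A$ generates a COF with phase space $V_1\times X_1$, so that by Lemma~\ref{wellp2char} the reduction matrix $\mathbf{A}_1:=\begin{pmatrix} 0 & I_{V_1}\\ A & 0\end{pmatrix}$ with domain $D(A)\times V_1$ generates a $C_0$-semigroup on $V_1\times X_1$ whose block form is \eqref{groupcos}. The key computation is then the conjugation: a direct block multiplication gives
\begin{equation*}
\mathbf{U}\,\mathbf{A}_1\,\mathbf{U}^{-1}=\begin{pmatrix} 0 & I_{V_2}\\ UAU^{-1} & 0\end{pmatrix},
\end{equation*}
with domain $\mathbf{U}(D(A)\times V_1)=D(UAU^{-1})\times V_2$. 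In other words, the conjugated operator is exactly the first-order reduction matrix of $UAU^{-1}$ relative to the candidate Kisy\'nski space $V_2$.

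By the standard fact that conjugation by an isomorphism preserves generation of $C_0$-semigroups, with $e^{t\mathbf{U}\mathbf{A}_1\mathbf{U}^{-1}}=\mathbf{U}\,e^{t\mathbf{A}_1}\,\mathbf{U}^{-1}$, the matrix $\mathbf{U}\mathbf{A}_1\mathbf{U}^{-1}$ generates a $C_0$-semigroup on $V_2\times X_2$; the converse implication of Lemma~\ref{wellp2char} then shows that $UAU^{-1}$ generates a COF with phase space $V_2\times X_2$. The reverse implication follows by symmetry, running the same argument with $U^{-1}$ in place of $U$ and the indices $1,2$ exchanged. For the intertwining formula, I would insert the block form \eqref{groupcos} for both semigroups into $e^{t\mathbf{U}\mathbf{A}_1\mathbf{U}^{-1}}=\mathbf{U}\,e^{t\mathbf{A}_1}\,\mathbf{U}^{-1}$ and compare the $(1,1)$-entries, which yields $C(t,UAU^{-1})=U\,C(t,A)\,U^{-1}$ at once (and, as a bonus, $S(t,UAU^{-1})=U\,S(t,A)\,U^{-1}$ from the $(1,2)$-entry).

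The main difficulty is not analytic but one of bookkeeping: the delicate point is the precise identification of $\mathbf{U}\mathbf{A}_1\mathbf{U}^{-1}$ with the reduction matrix of $UAU^{-1}$, including its domain and the verification that $V_2$ genuinely serves as the Kisy\'nski space. In particular one must check that $U$ intertwines the two canonical embeddings, that is, $U I_{V_1}U^{-1}=I_{V_2}$, which is exactly where the hypothesis that one and the same operator $U$ is an isomorphism both $V_1\to V_2$ and $X_1\to X_2$ enters in an essential way. Once this operator identity (with matching domains) is in place, the remainder is the standard theory.
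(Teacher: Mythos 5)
Your proof is correct, but note that the paper itself offers no proof of Lemma~\ref{similcos}: it is quoted as a known result with a pointer to \cite[\S~3.14]{[ABHN01]} and \cite[\S~2]{[Mu05]}. What your proposal does is reconstruct a self-contained argument, and it does so by precisely the technique the paper uses elsewhere (compare the proof of Theorem~\ref{formcosm}): pass to the reduction matrix of Lemma~\ref{wellp2char}, conjugate by a block isomorphism, identify the conjugate as the reduction matrix of the transformed operator with the correct domain, and invoke the standard similarity result for $C_0$-semigroups. The computation $\mathbf{U}\mathbf{A}_1\mathbf{U}^{-1}=\begin{pmatrix} 0 & I_{V_2}\\ UAU^{-1} & 0\end{pmatrix}$ with domain $D(UAU^{-1})\times V_2$ is right, and you correctly isolate where the hypothesis that one and the same $U$ is an isomorphism on both levels enters ($UI_{V_1}U^{-1}=I_{V_2}$). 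Two minor bookkeeping points, neither of which damages the argument: (i) the identity $UC(t,A)U^{-1}=C(t,UAU^{-1})$ as operators on $X_2$ is read off most directly from the $(2,2)$-entries of $\mathbf{U}e^{t\mathbf{A}_1}\mathbf{U}^{-1}$, since the $(1,1)$-entries in \eqref{groupcos} are the restrictions of the cosine functions to $V_1$ resp.\ $V_2$, so comparing those yields the identity only on $V_2$ and requires an extra density-and-boundedness step to reach $X_2$ (your $(1,2)$-entry remark for the sine functions is fine); (ii) Lemma~\ref{wellp2char} gives the semigroup, hence the identity, only for $t\geq 0$, and the extension to all $t\in\mathbb{R}$ should be mentioned, e.g.\ via evenness of cosine operator functions, $C(-t,\cdot)=C(t,\cdot)$.
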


\begin{lem}\label{pert}
Let $A$ generate a COF with associated phase space $V\times X$. Then also $A+B$ generates a COF with associated phase space $V\times X$, provided $B$ is an operator that is bounded from $[D(A)]$ to $V$, or from $V$ to $X$. 
\end{lem}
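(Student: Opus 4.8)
The plan is to reduce the assertion to a perturbation statement for the $C_0$-semigroup attached to $A$ by Lemma~\ref{wellp2char}. First note that in either case $B$ is bounded from $[D(A)]$ to $X$: this is clear when $B$ is bounded from $[D(A)]$ to $V$ since $V\hookrightarrow X$, and it follows from $[D(A)]\hookrightarrow V$ when $B$ is bounded from $V$ to $X$. Hence $B$ is $A$-bounded, the graph norms of $A$ and $A+B$ are equivalent, $D(A+B)=D(A)$, and so $[D(A+B)]\hookrightarrow V\hookrightarrow X$. By Lemma~\ref{wellp2char} applied to $A$, the matrix $\mathbf A=\bigl(\begin{smallmatrix}0&I_V\\A&0\end{smallmatrix}\bigr)$ with $D(\mathbf A)=D(A)\times V$ generates a $C_0$-semigroup on $\mathbf X:=V\times X$; and by the same lemma read backwards it suffices to prove that
\begin{equation*}
\mathbf A_B=\begin{pmatrix}0&I_V\\A+B&0\end{pmatrix},\qquad D(\mathbf A_B)=D(A)\times V,
\end{equation*}
generates a $C_0$-semigroup on the \emph{same} space $\mathbf X$, for then the uniqueness of the Kisy\'nski space forces the phase space of $A+B$ to be exactly $V\times X$. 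Writing $\mathbf A_B=\mathbf A+\mathbf B$ with $\mathbf B=\bigl(\begin{smallmatrix}0&0\\B&0\end{smallmatrix}\bigr)$, everything is reduced to perturbing the generator $\mathbf A$ by $\mathbf B$.

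The case in which $B$ is bounded from $V$ to $X$ is then immediate. Here $\mathbf B$ acts by $(v,x)\mapsto(0,Bv)$ and satisfies $\|\mathbf B(v,x)\|_{\mathbf X}=\|Bv\|_X\le c\,\|v\|_V\le c\,\|(v,x)\|_{\mathbf X}$, so $\mathbf B\in\mathcal L(\mathbf X)$, and the bounded perturbation theorem for $C_0$-semigroups yields at once that $\mathbf A+\mathbf B$ generates a $C_0$-semigroup on $\mathbf X$.

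The remaining case, $B$ bounded from $[D(A)]$ to $V$, is the main obstacle, and I expect it to require genuinely more than the bounded perturbation theorem. Indeed $\mathbf B$ is then \emph{not} bounded on $\mathbf X$: the expression $\mathbf B(v,x)=(0,Bv)$ needs $v\in[D(A)]$, whereas a generic element of $\mathbf X$ only has $v\in V$. What is true is that $\mathbf B$ is bounded from $[D(\mathbf A)]=[D(A)]\times V$ into $\mathbf X$, with range in $\{0\}\times V$; this is a relatively bounded perturbation, for which no generation theorem holds in general Banach spaces. The natural idea is to move one step down the Sobolev scale $[D(A)]\hookrightarrow V\hookrightarrow X$: restricting $(e^{t\mathbf A})$ to $\mathbf X_1:=[D(\mathbf A)]=[D(A)]\times V$ gives a $C_0$-semigroup generated by the part $\mathbf A_1$ of $\mathbf A$ in $\mathbf X_1$, and on this shifted scale the hypothesis ``$B$ bounded from $[D(A)]$ to $V$'' is precisely a perturbation of the type already treated, so that $\mathbf B$ becomes bounded on $\mathbf X_1$ and $\mathbf A_1+\mathbf B$ generates a $C_0$-semigroup there.

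The delicate point — and the crux of the proof — is to transfer generation from the smaller space $\mathbf X_1$ back up to $\mathbf X$. This cannot be done by the naive similarity $R(\lambda,\mathbf A)\colon\mathbf X\to\mathbf X_1$, because that conjugation replaces $\mathbf B$ by $R(\lambda,\mathbf A)^{-1}\mathbf B R(\lambda,\mathbf A)\neq\mathbf B$, $\mathbf B$ failing to commute with the resolvent. Instead one must bring in the extra structure available here: either an extrapolation (Desch--Schappacher type) argument on the Sobolev tower of $\mathbf A$, or the analytic/parabolic spectral picture provided by Lemma~\ref{analy}, which controls the perturbation series for $R(\lambda,\mathbf A_B)=R(\lambda,\mathbf A)\sum_{n\ge0}\bigl(\mathbf B R(\lambda,\mathbf A)\bigr)^n$ on $\mathbf X$ and upgrades it to the full semigroup generation on $\mathbf X$. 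Once generation of $\mathbf A_B$ on $\mathbf X$ is secured, Lemma~\ref{wellp2char} returns the conclusion that $A+B$ generates a COF with associated phase space $V\times X$.
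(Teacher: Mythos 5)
The paper itself does not prove Lemma~\ref{pert}: it quotes it from \cite[\S~3.14]{[ABHN01]} and \cite[\S~2]{[Mu05]}, so your attempt has to be measured against the proofs in those references. Your first case, $B\in{\mathcal L}(V,X)$, is correct and is exactly the standard argument (cf.\ \cite[\S~3.14]{[ABHN01]}): $\mathbf B$ is bounded on the phase space $V\times X$, so the bounded perturbation theorem and Lemma~\ref{wellp2char} finish the job. (One inaccuracy there: the claimed equivalence of the graph norms of $A$ and $A+B$ does not follow from $A$-boundedness alone, which gives only the inequality $\Vert\cdot\Vert_{A+B}\leq c\Vert\cdot\Vert_{A}$; the converse requires relative bound $<1$ or closedness of $A+B$ plus the open mapping theorem. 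This is reparable and not the main issue.)

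The genuine gap is the second case, $B\in{\mathcal L}([D(A)],V)$ --- which is the case the paper actually relies on later, e.g.\ for $C^3_1\in{\mathcal L}([D(A_1)],V_3)$ in Example~\ref{strom} and for $D^{A,L}_\lambda\tilde B\in{\mathcal L}([D(\tilde B)],Y)$ in Theorem~\ref{main1}. You reduce it correctly (on $\mathbf X_1:=[D(\mathbf A)]=[D(A)]\times V$ the perturbation $\mathbf B$ is bounded, so $\mathbf A_1+\mathbf B$ generates there), and you correctly identify the remaining step, passing from $\mathbf X_1$ back to $\mathbf X$, as the crux; but you then stop, offering two candidate techniques and carrying out neither. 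Worse, one of the two would fail: Lemma~\ref{analy} concerns the cosine generator $A$ on $X$, not the reduction matrix $\mathbf A$, which generates a $C_0$-\emph{group} and hence is never the generator of an analytic semigroup unless it is bounded; and convergence of the Neumann series for $R(\lambda,\mathbf A_B)$ gives a resolvent bound one $\lambda$ at a time, which in a general Banach space yields no Hille--Yosida estimates and no generation. The other technique is the right one, and it is precisely the missing content: one must show that the extrapolation space of $\mathbf X_1$ with respect to $\mathbf A_1+\mathbf B$ is again $\mathbf X$. This follows from the identity $R(\lambda,\mathbf A_1+\mathbf B)=R(\lambda,\mathbf A_1)+R(\lambda,\mathbf A_1)\mathbf B R(\lambda,\mathbf A_1+\mathbf B)$ together with $\Vert R(\lambda,\mathbf A_1)\mathbf B\Vert_{{\mathcal L}(\mathbf X_1)}\to 0$ as $\lambda\to\infty$, which makes the two extrapolation norms $\Vert R(\lambda,\mathbf A_1+\mathbf B)\,\cdot\,\Vert_{\mathbf X_1}$ and $\Vert R(\lambda,\mathbf A_1)\,\cdot\,\Vert_{\mathbf X_1}$ equivalent; the extrapolated semigroup then lives on $\mathbf X$, and its generator is the continuous extension of $\mathbf A_1+\mathbf B$ to the domain $\mathbf X_1=D(A)\times V$, namely $\mathbf A+\mathbf B$, because $D(\mathbf A_1)$ is a core of $[D(\mathbf A)]$. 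Equivalently, one may invoke the known perturbation theorem for perturbations bounded on the domain, obtained from the Sobolev-tower construction of \cite[\S~II.5 and \S~III.1]{[EN00]}; this is in effect how the cited source \cite[\S~2]{[Mu05]} disposes of this case. Until this step is supplied, your proof covers only half of the lemma.
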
 

\section{Main results}\label{main}

To begin with, we state an analogue of~\cite[Proposition~3.1]{[Na89]} in the context of cosine operator functions.

\begin{theo}\label{formcosm}
Let $A$ and $D$ be generators of COFs on $X$ and $Y$, respectively, with associated phase space $V\times X$ and $W\times Y$. Consider an operator $H$ that is bounded from $[D(D)]$ to $X$. Then the operator matrix 
\begin{equation*}
{\mathcal A}:=
\begin{pmatrix}
A & H\\
0 & D
\end{pmatrix},
\qquad D({\mathcal A}):=D(A)\times D(D),
\end{equation*}
generates a COF on $X\times Y$, with associated phase space $\left(V\times W\right)\times \left(X\times Y\right)$, if and only if
\begin{equation}\label{crit}
\int_0^t C(t-s,A)HS(s,D)ds,\qquad t \geq 0,
\end{equation}
can be extended to a family of linear operators from $Y$ to $X$ which is uniformly bounded as $t\to 0^+$. In this case, there holds
\begin{equation}\label{formcoss}
C(t,{\mathcal A})=
\begin{pmatrix}
C(t,A) & \int_0^t C(t-s,A)HS(s,D)ds\\
0 & C(t,D)
\end{pmatrix},
\qquad t\in{\mathbb R},
\end{equation}
(where we consider the bounded linear extension from $Y$ to $X$ of the upper-right entry) and the associated SOF is
\begin{equation}\label{formsins}
S(t,{\mathcal A})=
\begin{pmatrix}
S(t,A) & \int_0^t S(t-s,A)HS(s,D)ds\\
0 & S(t,D)
\end{pmatrix},
\qquad t\in{\mathbb R}.
\end{equation}
Such a SOF is compact if and only if the embeddings $[D(A)]\hookrightarrow X$ and $[D(D)]\hookrightarrow Y$ are both compact.
\end{theo}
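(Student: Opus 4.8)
The plan is to reduce the second-order problem to a first-order one and then invoke Nagel's result. By Lemma~\ref{wellp2char}, $\mathcal A$ generates a COF with phase space $(V\times W)\times(X\times Y)$ if and only if its reduction matrix
\[
\mathbf M=\begin{pmatrix}0 & I_{V\times W}\\ \mathcal A & 0\end{pmatrix},\qquad D(\mathbf M)=D(\mathcal A)\times(V\times W),
\]
generates a $C_0$-semigroup on $(V\times W)\times(X\times Y)$. Writing a state as $(u_1,u_2,\dot u_1,\dot u_2)$ and permuting the coordinates into the order $(u_1,\dot u_1,u_2,\dot u_2)$ by the obvious isomorphism $U$ from $(V\times W)\times(X\times Y)$ onto $(V\times X)\times(W\times Y)$, one checks that $U\mathbf M U^{-1}$ is the upper triangular operator matrix $\begin{pmatrix}\mathbf A & \mathbf H\\ 0 & \mathbf D\end{pmatrix}$ with diagonal domain $D(\mathbf A)\times D(\mathbf D)$, where $\mathbf A,\mathbf D$ are the reduction matrices of $A,D$ and $\mathbf H(w,\dot w)=(0,Hw)$. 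Since $A,D$ generate COFs, $\mathbf A,\mathbf D$ generate $C_0$-semigroups (again Lemma~\ref{wellp2char}), and $\mathbf H$ is bounded from $[D(\mathbf D)]=[D(D)]\times W$ to $V\times X$ precisely because $H$ is bounded from $[D(D)]$ to $X$. This is exactly the situation of~\cite[Proposition~3.1]{[Na89]}; note that semigroup generation is invariant under the similarity $U$.

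By that proposition, $U\mathbf M U^{-1}$, and hence $\mathbf M$, generates a $C_0$-semigroup if and only if the convolution
\[
\int_0^t e^{(t-s)\mathbf A}\,\mathbf H\,e^{s\mathbf D}\,ds,\qquad t\ge0,
\]
defined a priori on the domain, extends to a family of bounded operators from $W\times Y$ to $V\times X$ that is uniformly bounded as $t\to0^+$, in which case the semigroup is upper triangular with this convolution as its off-diagonal block. Inserting the explicit form~\eqref{groupcos} of $e^{(t-s)\mathbf A}$ and $e^{s\mathbf D}$ and carrying out the two products, this block becomes the $2\times2$ matrix with entries $\int_0^t S(t-s,A)HC(s,D)\,ds$, $\int_0^t S(t-s,A)HS(s,D)\,ds$, $\int_0^t C(t-s,A)HC(s,D)\,ds$ and $\int_0^t C(t-s,A)HS(s,D)\,ds$; its lower-right entry is precisely~\eqref{crit}. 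Transporting the resulting semigroup back through $U$ and matching it against the block form~\eqref{groupcos} for $\mathcal A$ then yields~\eqref{formcoss} and~\eqref{formsins}, the upper-right entries of $C(t,\mathcal A)$ and $S(t,\mathcal A)$ being the lower-right, respectively upper-right, blocks of the convolution.

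It remains to show that uniform boundedness of the full $2\times2$ block is equivalent to condition~\eqref{crit} on the single entry $\int_0^t C(t-s,A)HS(s,D)\,ds$. One implication is immediate: this entry is a compression of the full block, so its uniform boundedness is necessary. For the converse I would control the three remaining entries using the mapping properties $S(\cdot,A)\colon X\to V$ and $S(\cdot,D)\colon W\to[D(D)]$, the boundedness of $C(\cdot,D)$ on $W$ and on $Y$, the identity $S(t,\cdot)=\int_0^t C(s,\cdot)\,ds$, and integration by parts in $s$, the point being to trade every ``rough'' factor $C(s,D)$, which composes with $H$ only on $[D(D)]$, for a ``smoothing'' factor $S(s,D)$. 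Integration by parts gives at once $\int_0^t S(t-s,A)HC(s,D)\,ds=\int_0^t C(t-s,A)HS(s,D)\,ds$ (the boundary terms vanish because $S(0,\cdot)=0$), so the upper-left entry is governed by~\eqref{crit}, while the upper-right entry is a $t$-antiderivative of~\eqref{crit} and hence even better behaved as $t\to0^+$. The main obstacle is the lower-left entry $\int_0^t C(t-s,A)HC(s,D)\,ds\colon W\to X$, whose naive estimate would require $H$ to be bounded on $W$; here integration by parts reduces it to the uniformly bounded boundary term $HS(t,D)\colon W\to X$ plus a term that I expect to dominate through~\eqref{crit} and the analytic smoothing of the semigroup generated by $\mathbf A$ furnished by Lemma~\ref{analy}, exploiting that $W\hookrightarrow Y$ supplies exactly the regularity compensating the surviving factor $C(s,D)$. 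This entry-by-entry reduction is the technical heart of the argument.

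Finally, for the compactness addendum, formula~\eqref{formsins} presents $S(t,\mathcal A)$ as an upper triangular matrix with diagonal $S(t,A),S(t,D)$ and off-diagonal $\int_0^t S(t-s,A)HS(s,D)\,ds$. If $S(t,\mathcal A)$ is compact, then so are its diagonal compressions $S(t,A)$ and $S(t,D)$, which by the known characterization of compactness of a single SOF (equivalently, of compactness of the resolvent, whose resolvent set is nonempty by Lemma~\ref{analy}) forces the embeddings $[D(A)]\hookrightarrow X$ and $[D(D)]\hookrightarrow Y$ to be compact. Conversely, compactness of these embeddings makes $S(t,A)$ and $S(t,D)$ compact, and then the off-diagonal entry, being a norm-convergent integral of products of the compact operator $S(s,D)$ (pre-composed with $H$) and the bounded operator $S(t-s,A)$, is itself compact; hence $S(t,\mathcal A)$ is compact.
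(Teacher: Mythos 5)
Your first three steps --- the reduction via Lemma~\ref{wellp2char}, the permutation isomorphism, the application of~\cite[Proposition~3.1]{[Na89]} to the triangular matrix $\bigl(\begin{smallmatrix}\mathbf A & \mathbf H\\ 0 & \mathbf D\end{smallmatrix}\bigr)$, the explicit computation of the convolution block, and the derivation of~\eqref{formcoss}--\eqref{formsins} together with the necessity of~\eqref{crit} --- coincide with the paper's proof. The genuine gap is in the converse direction, exactly where you flag it. Your entry-by-entry strategy leaves the lower-left entry $\int_0^t C(t-s,A)HC(s,D)\,ds$ unproven (``I expect to dominate'' is not an argument), and the mechanism you invoke for it cannot work: Lemma~\ref{analy} yields analyticity of the semigroup generated by $A$, \emph{not} of the one generated by the reduction matrix $\mathbf A$ --- the latter extends to a group and hence has no smoothing whatsoever --- and cosine families themselves have no regularizing effect, as the paper itself remarks right after the theorem, citing~\cite[Proposition~4.1]{[TW77]}. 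There is a second, unacknowledged gap: Nagel's criterion requires the convolution block to be uniformly bounded from $W\times Y$ \emph{into $V\times X$}, so the first-row entries must land in $V$. Your integration by parts identifies the upper-left entry with~\eqref{crit}, but~\eqref{crit} only provides a bound from $Y$ to $X$; it says nothing about boundedness from $W$ into $V$, so ``the upper-left entry is governed by~\eqref{crit}'' does not deliver what is needed.

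The paper avoids all hard entry estimates by a global argument: assuming~\eqref{crit}, one checks directly that the matrix family in~\eqref{formcoss} is a COF on $X\times Y$; integration by parts shows that the diagonal blocks of the convolution coincide and that the upper-right and lower-right blocks are $t$-antiderivatives of the upper-left and lower-left ones, so all four blocks are strongly continuous as soon as the lower-right one is. The COF property of~\eqref{formcoss}, fed back through Lemma~\ref{wellp2char} (existence of the phase space and of the uniformly bounded reduction semigroup), then forces the uniform boundedness of the entire block $\int_0^t e^{(t-s)\mathbf A}\mathbf H e^{s\mathbf D}\,ds$ as $t\to 0^+$, including the $V$-valued bounds for the first row that your approach cannot reach. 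Your compactness addendum, by contrast, is essentially the paper's argument (the Travis--Webb characterization of compact sine functions via compactness of the resolvent) and is fine, apart from the minor point that $HS(s,D)$ on all of $Y$ must be read through the bounded extension already built into~\eqref{formsins}.
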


\begin{proof}
The operator matrix $\mathcal A$ generates a COF with associated phase space $\left(V\times W\right)\times \left(X\times Y\right)$ if and only if the reduction matrix
$${\mathbb A}:=\begin{pmatrix}
0 & I_{V\times W}\\
{\mathcal A} & 0
\end{pmatrix},
\qquad D({\mathbb A}):=\left(D(A)\times D(D)\right)\times \left(V\times W\right),$$
generates a $C_0$-semigroup on $\left(V\times W\right)\times \left(X\times Y\right)$. Define the operator matrix
$$\mathbb{U}:=
\begin{pmatrix}
I_V & 0 & 0 & 0\\
0 & 0 & I_X & 0\\
0 & I_W & 0 & 0\\
0 & 0 & 0 & I_Y
\end{pmatrix},$$
which is an isomorphism from $\left(V\times W\right)\times \left(X\times Y\right)$ onto $\left(V\times X\right)\times \left(W\times Y\right)$ with
$$\mathbb{U}^{-1}:=
\begin{pmatrix}
I_V & 0 & 0 & 0\\
0 & 0 & I_W & 0\\
0 & I_X & 0 & 0\\
0 & 0 & 0 & I_Y
\end{pmatrix}.$$
Then the similar operator matrix $\tilde{\mathbb A}:={\mathbb U}{\mathbb A} {\mathbb U}^{-1}$ becomes
$$\tilde{\mathbb A}=
\begin{pmatrix}
{\mathbf A} & {\mathbf H}\\
0 & {\mathbf D}
\end{pmatrix},\qquad D(\tilde{\mathbb A}):=D({\mathbf A})\times D({\mathbf D}).$$
Here $\mathbf A$ and $\mathbf D$ are the reduction operator matrices
\begin{equation*}
{\mathbf A}:=
\begin{pmatrix}
0 & I_V\\
A & 0
\end{pmatrix},\qquad D({\mathbf A}):=D(A)\times V,
\end{equation*}
and
\begin{equation*}
{\mathbf D}:=
\begin{pmatrix}
0 & I_W\\
D & 0
\end{pmatrix},\qquad
D({\mathbf D}):=D(D)\times W,
\end{equation*}
respectively, while $\mathbf H$ is given by
\begin{equation*}
\qquad {\mathbf H}:=
\begin{pmatrix}
0 & 0\\
H & 0
\end{pmatrix},\qquad
D({\mathbf H}):=D({\mathbf D}).
\end{equation*}
By Lemma~\ref{wellp2char}, the operators $\mathbf A$ and $\mathbf D$ generate $C_0$-semigroups on $V\times X$ and $W\times Y$, respectively. Moreover ${\mathbf H}\in{\mathcal L}([D({\mathbf D})],V\times X)$, and a direct computation shows that 
$$e^{(t-s){\mathbf A}}{\mathbf H}e^{s{\mathbf D}}=
\begin{pmatrix}
S(t-s,A)HC(s,D) & S(t-s,A)HS(s,D)\\
C(t-s,A)HC(s,D) & C(t-s,A)HS(s,D)
\end{pmatrix},\qquad 0\leq s\leq t.$$
By virtue of \cite[Proposition~3.1]{[Na89]} we obtain that $\tilde{\mathbb A}$ generates a $C_0$-semigroup on $(W\times X)\times (W\times Y)$ if and only if the family of operators
$$\int_0^t e^{(t-s){\mathbf A}}{\mathbf H}e^{s{\mathbf D}}ds,\qquad t \geq 0,$$
from $W\times Y$ to $V\times X$ is uniformly bounded as $t\to 0^+$.
Hence, if $\tilde{\mathbb A}$ generates a $C_0$-semigroup, or equivalently if $\mathcal A$ generates a COF, then $\int_0^t C(t-s,A)HS(s,D)ds$ is uniformly bounded as $t\to 0^+$. 

Again by~\cite[Proposition~3.1]{[Na89]}
$$e^{t\tilde{\mathbb A}}=
\begin{pmatrix}
e^{t{\mathbf A}} & \int_0^t e^{(t-s){\mathbf A}}{\mathbf H}e^{s{\mathbf D}}ds\\
0 & e^{t{\mathbf D}}
\end{pmatrix},\qquad t\geq 0.$$
By similarity, $e^{t{\mathbb A}}=e^{t{\mathbb U}^{-1}\tilde{\mathbb A}{\mathbb U}}={\mathbb U}^{-1}e^{t\tilde{\mathbb A}}{\mathbb U}$, $t\geq 0$. Thus, a direct computation shows that the semigroup generated by $\mathbb A$ on the space $\left(V\times W\right)\times \left(X\times Y\right)$ is given by
\begin{equation*}
\begin{pmatrix}
\lineskip=0pt
C(t,A) & \int_0^t S(t-s,A)HC(s,D)ds &\linie & S(t,A) & \int_0^t S(t-s,A)HS(s,D)ds\\ \back
0 & C(t,D) &\linie & 0 & S(t,D)\\
\noalign{\hrule}
AS(t,A) & \int_0^t C(t-s,A)HC(s,D)ds &\linie & C(t,A) & \int_0^t C(t-s,A)HS(s,D)ds\\ \back
0 & DS(t,D) &\linie & 0 & C(t,D)\\
\end{pmatrix}
\end{equation*}
for $t\geq 0$. Since by assumption $\mathbb A$ generates a $C_0$-semigroup on the space $\left(V\times W\right)\times \left(X\times Y\right)$, comparing the above formula with~\eqref{groupcos} yields~\eqref{formcoss} and~\eqref{formsins}. 

One can also check directly that the lower-right block-entry defines a COF on $X\times Y$. Further, integrating by parts one sees that the upper-right and lower-right block-entries can be obtained by integrating the upper-left and lower-left block-entries, respectively, and moreover that the diagonal blocks coincide. Hence, by definition of SOF, all the blocks are strongly continuous families as soon as the lower-right is strongly continuous. Consequently, if the family $\int_0^t C(t-s,A) H S(s,D)ds$ is uniformly bounded as $t\to 0^+$, then the family $\int_0^t e^{(t-s){\mathbf A}}{\mathbf H}e^{s{\mathbf D}}ds$ is uniformly bounded as $t\to 0^+$.

Finally, it is known that a SOF is compact if and only if its generator has compact resolvent, cf.~\cite[Propositio~2.3]{[TW77]}, and the claim follows.\qed
\end{proof}

Observe that the operators defined in~\eqref{crit} are in general only bounded from $W$ to $X$. In Theorem~\ref{formcosm} we however required that they are bounded \emph{from the larger space} $Y$ to $X$. Such an extension can usually be performed whenever the integrated operator provides some kind of regularizing effect. However, usually COFs do not enjoys any regularity property (see~\cite[Propositio~4.1]{[TW77]}). Therefore, in most cases the following analogue of~\cite[Corollary~3.2]{[Na89]} can be applied more easily.

\begin{prop}\label{na89rev}
Let $A$ and $D$ be closed operators on the Banach spaces $X$ and $Y$, respectively. Consider further Banach spaces $V,W$ such that $[D(A)]\hookrightarrow V\hookrightarrow X$ and $[D(D)]\hookrightarrow W\hookrightarrow Y$. Assume moreover

- the operator $H$ to be bounded from $[D(D)]$ to $V$, or from $W$ to $X$, and

- the operator $K$ to be bounded from $[D(A)]$ to $W$, or from $V$ to $Y$. 

$ $ \!\!\!\!\!\!\!\!\!\!\! 
Then the operator matrix 
\begin{equation*}
\mathcal{A}:=
\begin{pmatrix}
A & H\\
K & D
\end{pmatrix},\qquad D({\mathcal A}):=D(A)\times D(D),
\end{equation*}
generates a COF with associated phase space $\left(V\times W\right)\times\left(X\times Y\right)$ if and only if $A$ and $D$ generate COFs with associated phase space $V\times X$ and $W\times Y$, respectively. In this case, $(S(t,\mathcal{A}))_{t\in\mathbb R}$ is compact if and only if the embeddings $[D(A)]\hookrightarrow X$ and $[D(D)]\hookrightarrow Y$ are both compact.
\end{prop}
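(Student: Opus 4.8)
The plan is to reduce the whole statement to the diagonal operator matrix $\mathcal{A}_0:=\begin{pmatrix} A & 0\\ 0 & D\end{pmatrix}$ with domain $D(A)\times D(D)$, and to move the off-diagonal entries in and out by means of the perturbation result, Lemma~\ref{pert}. I would first record that $\mathcal{A}_0$ generates a COF with associated phase space $(V\times W)\times(X\times Y)$ if and only if $A$ and $D$ generate COFs with phase spaces $V\times X$ and $W\times Y$, respectively. This is essentially immediate: applying Lemma~\ref{wellp2char} to $\mathcal{A}_0$ and then permuting coordinates by the isomorphism $\mathbb{U}$ of the proof of Theorem~\ref{formcosm} turns the reduction of $\mathcal{A}_0$ into the block-diagonal matrix $\operatorname{diag}(\mathbf A,\mathbf D)$ on $(V\times X)\times(W\times Y)$, which generates a $C_0$-semigroup exactly when both diagonal blocks do; a second appeal to Lemma~\ref{wellp2char} for each block yields the equivalence. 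Hence it suffices to prove that $\mathcal{A}$ generates a COF with phase space $(V\times W)\times(X\times Y)$ if and only if $\mathcal{A}_0$ does.

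The off-diagonal part $\mathcal{B}:=\mathcal{A}-\mathcal{A}_0$ need \emph{not} be a single admissible perturbation, because $H$ and $K$ may require different alternatives of Lemma~\ref{pert}. I would therefore split it as $\mathcal{B}=\mathcal{B}_H+\mathcal{B}_K$ with $\mathcal{B}_H:=\begin{pmatrix} 0 & H\\ 0 & 0\end{pmatrix}$ and $\mathcal{B}_K:=\begin{pmatrix} 0 & 0\\ K & 0\end{pmatrix}$, introduce the intermediate matrix $\mathcal{A}_1:=\mathcal{A}_0+\mathcal{B}_H$, and pass $\mathcal{A}_0\to\mathcal{A}_1\to\mathcal{A}$ by two successive applications of Lemma~\ref{pert}. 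Under the first alternative for $H$ the operator $\mathcal{B}_H$ maps $[D(A)]\times[D(D)]$ boundedly into $V\times W$, and under the second it maps $V\times W$ boundedly into $X\times Y$; either way $\mathcal{B}_H$ is admissible for $\mathcal{A}_0$, so Lemma~\ref{pert} gives that $\mathcal{A}_1$ generates a COF with phase space $(V\times W)\times(X\times Y)$. The analogous dichotomy for $K$ shows that $\mathcal{B}_K$ maps $[D(A)]\times[D(D)]$ boundedly into $V\times W$, or $V\times W$ boundedly into $X\times Y$, and a second application of Lemma~\ref{pert} produces the COF generated by $\mathcal{A}$. The converse implication is obtained by running the same two steps backwards, perturbing $\mathcal{A}$ by $-\mathcal{B}_K$ and then by $-\mathcal{B}_H$.

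The point that needs care, and which I expect to be the main obstacle, is the legitimacy of this backward passage: to subtract $\mathcal{B}_K$ from $\mathcal{A}$ under the first alternative for $K$ one must know that $\mathcal{B}_K$ is bounded from $[D(\mathcal{A})]$ itself, not merely from the a priori smaller space $[D(A)]\times[D(D)]$, into $V\times W$. I would settle this at the outset, using only the boundedness hypotheses on $H$ and $K$, by showing that $D(\mathcal{A})=D(A)\times D(D)$ with graph norm equivalent to the product graph norm. Indeed, in each of the four admissible combinations $\mathcal{B}_H$ is bounded from $[D(\mathcal{A}_0)]$ into $X\times Y$ and $\mathcal{B}_K$ is bounded from $[D(\mathcal{A}_1)]$ into $X\times Y$, so each off-diagonal piece is a relatively bounded perturbation and the identifications $[D(\mathcal{A})]=[D(\mathcal{A}_1)]=[D(\mathcal{A}_0)]=[D(A)]\times[D(D)]$ hold with equivalent norms. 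With these identifications the four mixed cases are treated uniformly and the forward and backward perturbation steps become symmetric, closing the equivalence.

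Finally, for the compactness assertion I would argue as in the proof of Theorem~\ref{formcosm}: a SOF is compact precisely when its generator has compact resolvent (cf.~\cite{[TW77]}), and for a generator the latter is equivalent to compactness of the embedding $[D(\mathcal{A})]\hookrightarrow X\times Y$. Since $[D(\mathcal{A})]=[D(A)]\times[D(D)]$ by the preceding paragraph, this product embedding is compact if and only if both factor embeddings $[D(A)]\hookrightarrow X$ and $[D(D)]\hookrightarrow Y$ are compact, which is exactly the stated characterisation.
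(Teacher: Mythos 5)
Your overall route is the same as the paper's: reduce to the diagonal matrix $\mathcal{A}_0$ and move the off-diagonal entries in and out via Lemma~\ref{pert}. The paper's own proof is essentially a terser version of your first two paragraphs (it states the diagonal equivalence without proof, perturbs by $\mathcal{H}$ and $\mathcal{K}$ successively, and leaves both the converse direction and the compactness claim implicit), so you are, if anything, more thorough. You also correctly identified the genuinely delicate point: in the second forward perturbation and in the backward passage, Lemma~\ref{pert} must be applied with the graph norm of the \emph{already perturbed} operator, not the product graph norm.

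However, your proposed resolution of that point is flawed as stated: the identification $[D(\mathcal{A})]=[D(A)]\times[D(D)]$ with equivalent norms does \emph{not} follow ``using only the boundedness hypotheses on $H$ and $K$''. Relative boundedness gives only one of the two inequalities between the graph norms, and the unconditional claim is in fact false: take $Y=X$, $D=A$ closed and unbounded, $V=W:=[D(A)]$, and $H=K:=-A$, which is bounded from $W$ to $X$ and from $V$ to $Y$, so the hypotheses of Proposition~\ref{na89rev} hold; yet $\mathcal{A}$ vanishes on the diagonal $\{(u,u):u\in D(A)\}$, so there its graph norm reduces to the $X\times X$-norm, which is not equivalent to the product graph norm. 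The gap is localized and easy to close, because the equivalence is only needed once one side of the asserted equivalence is in force: for the triangular intermediate matrix $\mathcal{A}_1$ the direct estimate $\Vert Au\Vert\le\Vert Au+Hv\Vert+\Vert Hv\Vert\le\Vert Au+Hv\Vert+C(\Vert v\Vert+\Vert Dv\Vert)$ yields the equivalence with no further assumptions; and in the backward direction $\mathcal{A}$ is assumed to generate a COF, hence is closed, so $[D(\mathcal{A})]$ is complete and the bounded identity map from $[D(A)]\times[D(D)]$ onto $[D(\mathcal{A})]$ is an isomorphism by the open mapping theorem. With closedness-plus-open-mapping (or the triangular estimate) substituted for ``relatively bounded, hence equivalent norms'', your argument, including the compactness part via~\cite{[TW77]}, is complete.
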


\begin{proof}
The diagonal matrix 
$${\mathcal A}_0:=
\begin{pmatrix}
A & 0\\
0 & D
\end{pmatrix},
\qquad D({\mathcal A}_0):=D(\mathcal{A}),$$ 
generates a COF with associated phase space $\left(V\times W\right)\times\left(X\times Y\right)$
if and only if $A$ and $D$ generate COFs with associated phase space $V\times X$ and $W\times Y$, respectively. Consider now perturbations of ${\mathcal A}_0$ given by the operator matrices
$${\mathcal H}:=
\begin{pmatrix}
0 & H\\
0 & 0
\end{pmatrix}
\qquad\hbox{and}\qquad
{\mathcal K}:=
\begin{pmatrix}
0 & 0\\
K & 0
\end{pmatrix}.$$ 
Observe that both $\mathcal H$ and $\mathcal K$ are, by assumption, either bounded from $[D(A)]\times [D(D)]$ to $V\times W$, or from $V\times W$ to $X\times Y$. By Lemma~\ref{pert} also their sum ${\mathcal A}_0+{\mathcal H}+{\mathcal K}={\mathcal A}$ generates a COF with associated phase space $\left(V\times W\right)\times\left(X\times Y\right)$.\qed
\end{proof}



\begin{rem}\label{na89revv}
\emph{In the special case of $A\in{\mathcal L}(X)$, Proposition~\ref{na89rev} reads as follows: Let $D$ be a closed operator on $Y$, and consider a further Banach space $W$ such that $[D(D)]\hookrightarrow W\hookrightarrow Y$. Assume moreover that $H\in{\mathcal L}([D(D)],X)$ and $K\in{\mathcal L}(X,Y)$. Then
\begin{equation*}
\mathcal{A}:=
\begin{pmatrix}
A & H\\
K & D
\end{pmatrix},\qquad D({\mathcal A}):=D(A)\times D(D),
\end{equation*}
generates a COF with associated phase space $\left(X\times W\right)\times\left(X\times Y\right)$ if and only if $D$ generates a COF with associated phase space $W\times Y$.}
\end{rem}

\section{Systems of abstract wave equations}\label{systems}

We consider in this section systems of $n$ abstract wave equations and show how they can be solved by means of the results of Section~\ref{main}.

\bigskip
In the trivial case of $n$ uncoupled oscillators modelled by
\begin{equation}\tag{US$_n$}
\left\{
\begin{array}{rcll}
\ddot{u}_1(t)&=&A_1 u_1(t), &t\in\mathbb R,\\
\ddot{u}_2(t)&=&A_2 u_2(t), &t\in\mathbb R,\\
\vdots\\
\ddot{u}_n(t)&=&A_n u_n(t),\quad &t\in\mathbb R,
\end{array}
\right.
\end{equation}
it is clear that the initial value problem associated with $(\rm{US}_n)$ is well-posed (in a natural sense) if and only if each operator $A_i$, $i=1,\ldots,n$, generates a COF (with suitable associated phase spaces $V_i\times X_i$).

If however there is an interplay among the single oscillators given by
\begin{equation}\tag{CS$_n$}
\left\{
\begin{array}{ll}
\ddot{u}_1(t)=A_1 u(t)+B^1_2 u_2(t)+B^1_3 u_3(t)\ldots +B^1_n u_n(t), &t\in\mathbb R,\\
\ddot{u}_2(t)=A_2 u(t)+C^2_1 u_1(t)+B^2_3 u_3(t)\ldots +B^2_n u_n(t), &t\in\mathbb R,\\
\vdots\\
\ddot{u}_n(t)=A_n u(t)+C^n_1 u_1(t)+C^n_2 u_2(t)\ldots +C^n_{n-1} u_{n-1}(t),\quad &t\in\mathbb R,
\end{array}
\right.
\end{equation}
assumptions on the operators $B^h_k$ and $C^k_h$, $1\leq h<k\leq n$, are needed in order to obtain well-posedness.

\begin{theo}\label{too}
Let the initial value problem associated with $(\rm{US}_n)$ be well-posed, so that $V_i\times X_i$ is the phase space associated with $A_i$, $1\leq i\leq n$. Consider operators $B^h_k$ and $C^k_h$, $1\leq h<k\leq n$, such that 

- $B^h_k\in{\mathcal L}([D(A_k)],V_h)$ or $B^h_k\in{\mathcal L}(V_k,X_h)$, and 

- $C^k_h\in{\mathcal L}([D(A_h)],V_k)$ or $C^k_h\in{\mathcal L}(V_h,X_k)$. 

\. \!\!\!\!\!\!\!\!\!\!\! Then the initial value problem associated with $(\rm{CS}_n)$ is governed by a COF with associated phase space $(\prod_{i=1}^n V_i)\times (\prod_{i=1}^n X_i)$, and in particular it is well-posed.
\end{theo}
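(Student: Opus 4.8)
The plan is to recast the coupled system $(\mathrm{CS}_n)$ as a single second-order abstract Cauchy problem $\ddot u(t)=\mathcal A u(t)$ on the product space $X:=\prod_{i=1}^n X_i$, where $u=(u_1,\dots,u_n)$ and $\mathcal A$ is the $n\times n$ operator matrix having diagonal entries $A_i$, upper off-diagonal entries $B^h_k$ (in row $h$, column $k$, for $h<k$) and lower off-diagonal entries $C^k_h$ (in row $k$, column $h$, for $h<k$), endowed with the diagonal domain $D(\mathcal A):=\prod_{i=1}^n D(A_i)$. By definition the initial value problem for $(\mathrm{CS}_n)$ is governed by a COF precisely when $\mathcal A$ generates one, so it suffices to prove that $\mathcal A$ generates a COF with associated phase space $V\times X$, where $V:=\prod_{i=1}^n V_i$.

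First I would treat the uncoupled part. Let $\mathcal A_0:=\mathrm{diag}(A_1,\dots,A_n)$ with $D(\mathcal A_0):=D(\mathcal A)$. Since each $A_i$ generates a COF with phase space $V_i\times X_i$, the COF and SOF of $\mathcal A_0$ are simply the block-diagonal families built from the $C(\cdot,A_i)$ and the $S(\cdot,A_i)$; hence $\mathcal A_0$ generates a COF on $X$ with associated phase space $V\times X$. I would also record that, up to equivalence of norms, $[D(\mathcal A_0)]=\prod_{i=1}^n [D(A_i)]$.

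Next I would view the coupling as a bounded perturbation of $\mathcal A_0$. Write $\mathcal A=\mathcal A_0+\mathcal B$, where $\mathcal B$ is the purely off-diagonal matrix collecting the entries $B^h_k$ and $C^k_h$. For a single-entry matrix, the assumption $B^h_k\in\mathcal L([D(A_k)],V_h)$ makes the corresponding one-entry matrix bounded from $[D(\mathcal A_0)]=\prod_i[D(A_i)]$ to $V=\prod_i V_i$, while $B^h_k\in\mathcal L(V_k,X_h)$ makes it bounded from $V$ to $X=\prod_i X_i$; the same dichotomy holds for each $C^k_h$ by hypothesis. Thus every off-diagonal entry is an admissible perturbation in the sense of Lemma~\ref{pert}. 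Since for any such perturbation $B$ one has $D(\mathcal A_0+B)=D(\mathcal A_0)$ with equivalent graph norms, Lemma~\ref{pert} may be applied repeatedly: one adds the finitely many entries one at a time (or, equivalently, splits $\mathcal B$ into its two admissible types and invokes Lemma~\ref{pert} twice), each step preserving the phase space $V\times X$. After exhausting all entries one reaches $\mathcal A_0+\mathcal B=\mathcal A$, which therefore generates a COF with associated phase space $V\times X$, and the well-posedness of $(\mathrm{CS}_n)$ follows at once.

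I expect no genuine analytic obstacle here: the statement is essentially an iterated application of the perturbation result of Lemma~\ref{pert}. The only point requiring care is the bookkeeping, namely verifying that a single entry bounded between the relevant component spaces induces a bounded operator between the product spaces of the correct type, and observing that the ``or'' in the hypotheses is harmless because the domain and phase space are left invariant at each perturbation step, so perturbations of the two different types may be added successively.
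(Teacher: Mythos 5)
Your proof is correct, and it rests on the same mechanism as the paper's, but the organization is genuinely different. The paper reduces to the case $n=2$: it writes $(\mathrm{CS}_2)$ as a $2\times 2$ operator matrix with diagonal domain and invokes Proposition~\ref{na89rev}, whose hypotheses on $H=B^1_2$ and $K=C^2_1$ are exactly the stated conditions; the general case is then dispatched by induction on $n$. You instead handle the $n\times n$ matrix in one stroke, decomposing it as diagonal part plus off-diagonal coupling and applying Lemma~\ref{pert} --- which is precisely how the paper proves Proposition~\ref{na89rev} itself, so you have in effect inlined that argument for general $n$. What your route buys: no induction is needed, and you never have to verify that, after grouping the first $n-1$ equations into a single block, the coupling operators still satisfy the hypotheses relative to the block spaces (which implicitly requires identifying the graph norm of an already-coupled matrix with the product of the graph norms of the $A_i$). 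What the paper's route buys: a one-line proof from an already-stated result. One step in your write-up deserves care: iterating Lemma~\ref{pert} entry by entry is delicate, because after the first perturbation the reference operator changes, so admissibility of a later entry of type $\mathcal{L}([D(A_k)],V_h)$ requires knowing that the graph norm of the already-perturbed operator is equivalent to that of $\mathcal{A}_0$. This is true (both operators are closed, being COF generators, and comparable complete norms are equivalent), but it is not automatic, and you only assert it. Your alternative --- collect all entries of type $\mathcal{L}([D(\mathcal{A}_0)],V)$ into one perturbation and apply Lemma~\ref{pert} once, then add the block of all $\mathcal{L}(V,X)$-type entries, whose admissibility makes no reference to the domain --- is the clean route, and with that ordering (domain-type block first) the proof closes without any graph-norm argument.
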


\begin{proof}
For the sake of simplicity we only discuss the case $n=2$, since the general case can be proved by induction on $n$. Consider the system
\begin{equation}\tag{CS$_2$}
\left\{
\begin{array}{ll}
\ddot{u}_1(t)=A_1 u_1(t)+B^1_2 u_2(t),\quad &t\in\mathbb R,\\
\ddot{u}_2(t)=A_2 u_2(t)+C^2_1 u_1(t), &t\in\mathbb R,\\
\end{array}
\right.
\end{equation}
which can be written as an abstract wave equation
$$\ddot{\mathfrak{u}}(t)=\mathcal{A}\mathfrak{u}(t),\qquad t\in\mathbb R,$$
on the Banach space $X_1\times X_2$, where
$$\mathcal{A}:=
\begin{pmatrix}
A_1 & B^1_2\\
C^2_1 & A_2
\end{pmatrix},
\qquad D(\mathcal{A}):=D(A_1)\times D(A_2),$$
and 
$$\mathfrak{u}:=\begin{pmatrix}u_1\\ u_2\end{pmatrix}.$$
By assumption, $A_1$ and $A_2$ generate COFs with associated phase space $V_1\times X_1$ and $V_2\times X_2$, respectively. Due to the assumptions on $B^1_2$ and $C^2_1$ the claim follows from Proposition~\ref{na89rev}.\qed
\end{proof}

%

\begin{exa}\label{strom}
\emph{Following work of A. Matsumura and T. Nishida (\cite{[NM79]}, \cite{[NM80]}), some linearized equations from fluid dynamics in an open, bounded domain $\Omega\subset \mathbb{R}^n$ lead to consider the operator matrix
$$\mathcal{A}:=\begin{pmatrix}
A_1 & B^1_2 & B^1_3\\
C^2_1 & A_2 & 0\\
C^3_1 & 0 & 0
\end{pmatrix},$$
where the operator entries $A_1,A_2,B^1_2, B^1_3, C^2_1, C^3_1$ are defined below. Such a setting has been thoroughly discussed in~\cite{[St87]} and, in a simplified and slightly different version, in~\cite[\S~4]{[Na89]}. Both authors show, by different means, that $\mathcal A$ generates an analytic semigroup on $\big(L^p(\Omega)\big)^n\times L^p(\Omega)\times W^{1,p}(\Omega)$, $1<p<\infty$; in~\cite{[St87]} some description of the spectrum of $\mathcal A$ is also given, and it is shown that the generated semigroup has angle of analyticity $\geq \frac{\pi}{4}$.}

\emph{Our aim is to show that such an operator matrix, equipped with domain
$$D({\mathcal A}):=D(A_1)\times D(A_2)\times H^1(\Omega),$$
is in fact the generator of a COF on the Hilbert space $\big(L^2(\Omega)\big)^n\times L^2(\Omega)\times H^1(\Omega)$. Consequently, by Lemma~\ref{analy} it also generates an analytic semigroup of angle $\frac{\pi}{2}$ on the same space, and moreover its spectrum is contained inside a parabola. Here 
$$A_1:=\mu_1\Delta_n+ \mu_2{\rm grad}\!\cdot\!{\rm div},\qquad D(A_1):=\big(H^2(\Omega)\cap H^1_0(\Omega)\big)^n,$$
with $(\mu_1,\mu_2)\in{\mathbb R}^2_+\setminus\{0,0\}$. If $\partial\Omega$ is smooth enough, then integrating by parts a direct computation shows that $A_1$ is the operator associated with the sesquilinear form $a$ on the Hilbert space $\big(L^2(\Omega)\big)^n$ defined by
$$a(f,g):=\sum_{i,j=1}^n\int_\Omega \mu_1\frac{\partial f_i}{\partial x_j}\overline{\frac{\partial g_i}{\partial x_j}}+ \mu_2\frac{\partial f_i}{\partial x_i}\overline{\frac{\partial g_j}{\partial x_j}}dx$$
for all
$$f:=\begin{pmatrix}
f_1\\ \vdots\\ f_n
\end{pmatrix},
g:=\begin{pmatrix}
g_1\\ \vdots\\ g_n
\end{pmatrix}\in
D(a):= \big(H^1_0(\Omega)\big)^n.$$
One sees that $a$ is symmetric, closed, and densely defined. Moreover, $a$ is positive, since $$a(f,f):=\mu_1\sum_{i=1}^n\int_\Omega \vert {\rm div} f_i\vert^2 dx + \mu_2\int_\Omega\left\vert {\rm grad} f\right\vert^2dx.$$
It is then well-known (see, e.g.,~\cite[Theorem~1.2.1]{[Da90]}) that the operator $A_1$ associated with $a$ is self-adjoint and dissipative, hence the generator of a COF with associated phase space $V_1\times X_1:=\big(H^1_0(\Omega)\big)^n\times \big(L^2(\Omega)\big)^n$.}

\emph{Further, for $\mu_3>0$, we define $A_2:=\mu_3\Delta$ on $\Omega$ equipped with either (in~\cite{[St87]}) Robin, or (in~\cite[\S~4]{[Na89]}) Dirichlet boundary conditions
. In both cases $A_2$ generates a COF, and it is well-known (see~\cite[Chapter~4]{[Fa85]}) that the associated phase space $V_2\times X_2$ is $H^1(\Omega)\times L^2(\Omega)$ or $H^1_0(\Omega)\times L^2(\Omega)$, respectively. Also any bounded operator on $H^1(\Omega)$ generates a COF with associated phase space $H^1(\Omega)\times H^1(\Omega)$.}

\emph{Define finally 
$$B^1_2:= p_1{\rm grad},\qquad B^1_3:=p_2{\rm grad},$$
and 
$$C^2_1:=p_3{\rm div},\qquad C^3_1:=p_4{\rm div},$$
where $p_1,p_2,p_3,p_4$ are constants.
Since the operator ${\rm grad}$ is bounded from $H^1(\Omega)$ to $\big(L^2(\Omega)\big)^n$, it follows that $B^1_2\in{\mathcal L}(V_2,X_1)$ and also $B^1_3\in{\mathcal L}(V_3,X_1)$. Similarly, the operator ${\rm div}$ is bounded from $\big(H^1(\Omega)\big)^n$ to $L^2(\Omega)$ as well as from $\big(H^2(\Omega)\big)^n$ to $H^1(\Omega)$, and accordingly $C^2_1\in{\mathcal L}(V_1,X_2)$ and also $C^3_1\in{\mathcal L}([D(A_1)],V_3)$.}

\emph{We conclude by Theorem~\ref{too} that the whole operator matrix $\mathcal A$ generates a COF The associated phase space is 
$$\Big(\big(H^1_0(\Omega)\big)^n\times H^1(\Omega)\times H^1(\Omega)\Big)\times \Big(\big(L^2(\Omega)\big)^n\times L^2(\Omega)\times H^1(\Omega)\Big)$$
if $A_2$ is equipped with Robin boundary conditions, or rather $$\Big(\big(H^1_0(\Omega)\big)^n\times H^1_0(\Omega)\times H^1(\Omega)\Big)\times \Big(\big(L^2(\Omega)\big)^n\times L^2(\Omega)\times H^1(\Omega)\Big)$$
if $A_2$ is equipped with Dirichlet boundary conditions.}
\end{exa}

\section{Abstract initial--boundary value problems}\label{aibvp}

We impose the following assumptions throughout this section and refer to~\cite{[CENN03]} and~\cite{[Mu05]} for motivation.

\begin{assum}\label{assdyn} $ $
\begin{enumerate}
\item $X$ and $Y$ are Banach spaces such that $Y\hookrightarrow X$.
\item $\partial X$ and $\partial Y$ are Banach spaces such that $\partial Y\hookrightarrow \partial X$.
\item ${A}:D({A})\to X$ is linear with $D(A)\subset Y$.
\item $L:D(A)\to {\partial X}$ is linear and surjective.
\item ${A}_0:=A_{\vert\ker(L)}$ is densely defined and has nonempty resolvent set.
\item $\begin{pmatrix}A\\L\end{pmatrix}:D(A)\to X\times \partial X$ is closed\footnote{Observe that, under the Assumption~\ref{assdyn}.(6), we obtain a Banach space by endowing $D(A)$ with the graph norm of ${A\choose L}$, i.e.,
$$\Vert u\Vert_{A\choose L}:=\Vert u\Vert_X +\Vert Au\Vert_X + \Vert Lu\Vert_{\partial X}.$$
We denote this Banach space by $[D(A)_L]$.} as an operator from $X$ to $X\times \partial X$.
\item $B:[D(A)_L]\to\partial X$ is linear and bounded; further, $B$ is bounded either from $[D(A_0)]$ to $\partial Y$, or from $Y$ to $\partial X$. 
\item $\tilde{B}:D(\tilde{B})\subset {\partial X}\to{\partial X}$ is linear and closed, with $D(\tilde B)\subset \partial Y$.
\end{enumerate}
\end{assum}

Under the Assumptions~\ref{assdyn} one can define a solution operator $D^{A,L}_\lambda$ of the abstract (eigenvalue) Dirichlet problem
\begin{equation}\tag{ADP}
\left\{
\begin{array}{rcl}
Au&=&\lambda u,\\
Lu&=&w,
\end{array}
\right.
\end{equation}
for all $\lambda\in\rho(A_0)$. More precisely, the following holds, cf.~\cite[Lemma~2.3]{[CENN03]} and~\cite[Lemma~3.2]{[Mu05]}.

\begin{lem}\label{dirichdef}
The problem $({\rm ADP})$ admits a unique solution $u:=D^{A,L}_\lambda w$ for all $w\in \partial X$ and $\lambda\in\rho(A_0)$. Moreover, the solution operator $D_\lambda^{A,L}$ is bounded from $\partial X$ to $Z$ for every Banach space $Z$ satisfying $D(A^\infty)\subset Z\hookrightarrow X$. In particular, $D_\lambda^{A,L}\in{\mathcal L}(\partial X,[D(A)_L])$ as well as $D_\lambda^{A,L}\in{\mathcal L}(\partial X,Y)$.
\end{lem}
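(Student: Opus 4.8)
The plan is to establish existence and uniqueness for (ADP) first, and then to bootstrap the regularity of the solution operator. For uniqueness, suppose $u_1$ and $u_2$ both solve (ADP) for the same $w$ and $\lambda$. Then $u:=u_1-u_2$ satisfies $Au=\lambda u$ and $Lu=0$, so $u\in\ker(L)=D(A_0)$ and $(\lambda-A_0)u=0$. Since $\lambda\in\rho(A_0)$ by Assumption~\ref{assdyn}.(5), we conclude $u=0$, giving uniqueness. For existence, the idea is standard in this boundary-operator framework: since $L:D(A)\to\partial X$ is surjective by Assumption~\ref{assdyn}.(4), pick any $v\in D(A)$ with $Lv=w$. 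The element $Av-\lambda v$ lies in $X$, and one solves the inhomogeneous interior problem $(\lambda-A_0)z=Av-\lambda v$ using $\lambda\in\rho(A_0)$, setting $z:=R(\lambda,A_0)(Av-\lambda v)\in D(A_0)$. Then $u:=v-z$ satisfies $Lu=Lv-Lz=w-0=w$ (as $z\in\ker(L)$) and $(\lambda-A)u=(\lambda-A)v-(\lambda-A_0)z=(\lambda-A)v-(Av-\lambda v)=0$, i.e.\ $Au=\lambda u$, which gives the desired solution $u=D^{A,L}_\lambda w$.

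Next I would address the mapping properties. The operator $D^{A,L}_\lambda$ is well-defined and linear by the uniqueness just proved. To see that it maps $\partial X$ into $[D(A)_L]$ boundedly, observe that the graph $\{(w,u):u=D^{A,L}_\lambda w\}$ is the solution set of a closed relation: if $w_k\to w$ in $\partial X$ and $u_k=D^{A,L}_\lambda w_k\to u$ in $[D(A)_L]$, then since ${A\choose L}$ is closed (Assumption~\ref{assdyn}.(6)) and $Au_k=\lambda u_k$, $Lu_k=w_k$, passing to the limit yields $Au=\lambda u$ and $Lu=w$, so $u=D^{A,L}_\lambda w$ by uniqueness. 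Hence the map $\partial X\to[D(A)_L]$ has closed graph, and the closed graph theorem yields boundedness, i.e.\ $D^{A,L}_\lambda\in\mathcal L(\partial X,[D(A)_L])$. Since $[D(A)_L]\hookrightarrow Y$ (as $D(A)\subset Y$ by Assumption~\ref{assdyn}.(3), with the graph norm dominating the $Y$-norm), composing with this embedding gives $D^{A,L}_\lambda\in\mathcal L(\partial X,Y)$.

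Finally I would prove the statement that $D^{A,L}_\lambda$ is bounded from $\partial X$ into any $Z$ with $D(A^\infty)\subset Z\hookrightarrow X$. The crucial algebraic observation is that $D^{A,L}_\lambda w\in D(A^\infty)$ in a suitable sense: since $A\,D^{A,L}_\lambda w=\lambda D^{A,L}_\lambda w$, the range of $D^{A,L}_\lambda$ consists of eigenvectors of $A$, which are automatically in $D(A^k)$ for all $k$ with $L$ annihilating the relevant iterates, so they lie in $D(A^\infty)\subset Z$. Boundedness into $Z$ then follows either from a repeated closed-graph argument as above (with $Z$ in place of $[D(A)_L]$, using that the eigenvalue relation is preserved under $Z$-limits because $Z\hookrightarrow X$ and $A$ is closed) or by factoring through $[D(A)_L]$ when $Z$ carries the appropriate intermediate topology.

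I expect the main obstacle to be the regularity claim for a general intermediate space $Z$ with $D(A^\infty)\subset Z\hookrightarrow X$: one must carefully justify that the range lands in $D(A^\infty)$ and pin down the topology on $Z$ under which the closed-graph argument applies, since $Z$ is only specified up to these embedding constraints rather than as a concrete space. The existence and the two explicit statements $D^{A,L}_\lambda\in\mathcal L(\partial X,[D(A)_L])$ and $D^{A,L}_\lambda\in\mathcal L(\partial X,Y)$ should be routine consequences of surjectivity of $L$, the resolvent of $A_0$, and closedness of ${A\choose L}$.
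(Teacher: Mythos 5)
The paper itself contains no proof of this lemma: it is quoted with references to \cite{[CENN03]} and \cite{[Mu05]}, so there is no internal argument to compare against. Your route --- uniqueness from $\lambda\in\rho(A_0)$, existence by correcting an arbitrary $L$-preimage $v$ of $w$ with a resolvent term, boundedness via the closed graph theorem, and the general $Z$-statement via the observation that the range of $D^{A,L}_\lambda$ consists of eigenvectors of $A$ and hence lies in $D(A^\infty)$ --- is exactly the standard argument behind the cited lemmas, and its skeleton is sound. Two concrete slips need repair, however.

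First, a sign error makes your existence verification false as written: with $z:=R(\lambda,A_0)(Av-\lambda v)$ one gets $(\lambda-A)(v-z)=(\lambda-A)v-(Av-\lambda v)=2(\lambda-A)v$, which is not $0$ in general. Take instead $z:=R(\lambda,A_0)\bigl((\lambda-A)v\bigr)$ and $u:=v-z$ (equivalently, keep your $z$ and set $u:=v+z$); then indeed $(\lambda-A)u=0$ and $Lu=w$, i.e.\ $D^{A,L}_\lambda w=\bigl(I-R(\lambda,A_0)(\lambda-A)\bigr)v$. Second, for the general Banach space $Z$ with $D(A^\infty)\subset Z\hookrightarrow X$ you justify closedness of the graph by appealing to ``$A$ is closed''; but $A$ itself is not assumed closed --- Assumption~\ref{assdyn}.(6) only makes ${A\choose L}$ closed. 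The repair is easy: either run the closed-graph argument with ${A\choose L}$, since all three needed limits are available ($u_k\to v$ and $Au_k=\lambda u_k\to\lambda v$ in $X$, and $Lu_k=w_k\to w$ in $\partial X$, whence $Av=\lambda v$ and $Lv=w$, so $v=D^{A,L}_\lambda w$ by uniqueness); or, simpler still, use what you have already proved: $D^{A,L}_\lambda\in{\mathcal L}(\partial X,[D(A)_L])$ gives $D^{A,L}_\lambda w_k\to D^{A,L}_\lambda w$ in $X$, so any $Z$-limit of $D^{A,L}_\lambda w_k$ equals $D^{A,L}_\lambda w$ because $Z\hookrightarrow X$. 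This also resolves the obstacle you anticipated: nothing about the topology of $Z$ is needed beyond its completeness and the continuity of $Z\hookrightarrow X$, and the range lies in $D(A^\infty)$ simply because $Au=\lambda u\in D(A)$ iterates ($u\in D(A^k)$ for all $k$ by induction; the clause about ``$L$ annihilating the relevant iterates'' plays no role). By contrast, your alternative of ``factoring through $[D(A)_L]$'' is not viable as stated, since no relation between the norm of $Z$ and that of $[D(A)_L]$ is assumed. Finally, note that your parenthetical claim $[D(A)_L]\hookrightarrow Y$ is correct but is itself a closed-graph fact, not automatic; alternatively, both ``in particular'' statements follow at once from the general $Z$-statement, since $Y$ and $[D(A)_L]$ are Banach spaces sandwiched between $D(A^\infty)$ and $X$.
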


Observe that, by Lemma~\ref{dirichdef}, $BD^{A,L}_\lambda\in{\mathcal L}(\partial X)$ for all $\lambda\in\rho(A_0)$.

\bigskip
We want to discuss well-posedness for a second order abstract initial-boundary value problem of the form
\begin{equation}\tag{AIBPV$^2$}
\left\{
\begin{array}{rcll}
 \ddot{u}(t)&=& {A}u(t), &t\in{\mathbb R},\\
 \ddot{w}(t)&=& Bu(t)+ \tilde{B} w(t), &t\in {\mathbb R},\\
 w(t)&=&Lu(t), &t\in {\mathbb R},\\
 u(0)&=&f, \qquad\dot{u}(0)=g,&\\
 w(0)&=&h, \qquad\dot{w}(0)=j.&
\end{array}
\right.
\end{equation}
Observe that the equations on the first and the fourth line take place on the Banach space $X$, while the remainders on the Banach space $\partial X$. We begin by re-writing $({\rm AIBVP}^2)$ as a more standard second order abstract Cauchy problem
\begin{equation}\tag{$\mathcal{ACP}^2$}
\left\{
\begin{array}{rcll}
 \ddot{\mathfrak u}(t)&=& \tilde{\mathcal A}\mathfrak{u}(t), &t\in{\mathbb R},\\
 {\mathfrak u}(0)&=&\mathfrak{f},\qquad \dot{\mathfrak u}(0)=\mathfrak{g},&
\end{array}
\right.
\end{equation}
on the product space $\mathcal{X}:=X\times {\partial X}$, where
\begin{equation}\label{acorsivo}
\tilde{\mathcal A}:=\begin{pmatrix}
A & 0\\
B & \tilde{B}
\end{pmatrix},\qquad D(\tilde{\mathcal A}):=\left\{
\begin{pmatrix}
u\\ w
\end{pmatrix}\in D(A)\times D(\tilde{B}) : Lu=w
\right\},
\end{equation}
is an operator matrix with \emph{coupled} domain on $\mathcal X$.

Here the new variable $\mathfrak{u}(\cdot)$ and the inital data $\mathfrak{f}, \mathfrak{g}$ are to be understood as
$$\mathfrak{u}(t):=\begin{pmatrix}
u(t)\\ Lu(t)
\end{pmatrix}\quad\quad\hbox{for}\quad t\in{\mathbb R},\qquad\quad 
\mathfrak{f}:=\begin{pmatrix}
f\\ h
\end{pmatrix},\quad 
\mathfrak{g}:=\begin{pmatrix}
g\\ j
\end{pmatrix}.$$
Taking the components of $(\mathcal{ACP}^2)$ in the factor spaces of $\mathcal X$ yields the first two equations in $(\rm{AIBVP}^2)$, while the coupling relation $Lu(t)=w(t)$, $t\in\mathbb R$, is incorporated in the domain of the operator matrix $\tilde{\mathcal A}$. We can thus equivalently investigate $(\mathcal{ACP}^2)$ instead of $(\rm{AIBVP}^2)$. In particular, we are interested in characterizing whether $\tilde{\mathcal A}$ generates a COF in terms of analogue properties of $A_0$ and $\tilde B$.

\bigskip
Taking into account Lemma~\ref{dirichdef} and~\cite[Lemma~3.10]{[Mu05b]} (see also~\cite[\S~2]{[En99]}), a direct matrix computation yields the following.

\begin{lem}\label{simil}
Let $\lambda\in\rho(A_0)$. Then $\tilde{\mathcal A}-\lambda$ is similar to the operator matrix
\begin{equation}\label{simileq}
{\mathcal A}_\lambda:=
\begin{pmatrix}
A_0-D_\lambda^{A,L}B-\lambda & -D_\lambda^{A,L}(\tilde{B}+BD^{A,L}_\lambda-\lambda)\\
B & \tilde{B}+BD^{A,L}_\lambda-\lambda
\end{pmatrix},
\end{equation}
with diagonal domain $D({\mathcal A}_\lambda):=D(A_0)\times D(\tilde{B})$. The similarity transformation is given by the operator 
$${\mathcal M}_\lambda:=\begin{pmatrix}
I_X & -D_\lambda^{A,L}\\
0 & I_{\partial X}
\end{pmatrix},$$ 
which is an isomorphism on $\mathcal Y:=Y\times \partial Y$ as well as on $\mathcal X$.
\end{lem}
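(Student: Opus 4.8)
The plan is to exhibit $\mathcal M_\lambda$ together with its inverse, verify that it is an isomorphism on both $\mathcal X$ and $\mathcal Y$, and then carry out a direct matrix conjugation showing that $\mathcal M_\lambda(\tilde{\mathcal A}-\lambda)\mathcal M_\lambda^{-1}=\mathcal A_\lambda$, the only nontrivial point being the bookkeeping of the (unbounded) domains. First I would write down the inverse
\begin{equation*}
\mathcal M_\lambda^{-1}=\begin{pmatrix}
I_X & D_\lambda^{A,L}\\
0 & I_{\partial X}
\end{pmatrix},
\end{equation*}
which is immediate because $\mathcal M_\lambda$ is upper triangular with identity diagonal. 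By Lemma~\ref{dirichdef} one has $D_\lambda^{A,L}\in\mathcal L(\partial X,Y)\subset\mathcal L(\partial X,X)$ (using $Y\hookrightarrow X$), so both $\mathcal M_\lambda$ and $\mathcal M_\lambda^{-1}$ are bounded on $\mathcal X=X\times\partial X$, whence $\mathcal M_\lambda$ is an isomorphism of $\mathcal X$. For $\mathcal Y=Y\times\partial Y$ I would invoke the same lemma together with $\partial Y\hookrightarrow\partial X$ to see that $D_\lambda^{A,L}$ maps $\partial Y$ boundedly into $Y$; hence the off-diagonal entry respects the finer spaces and $\mathcal M_\lambda$ restricts to an isomorphism of $\mathcal Y$ as well.

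Next I would track the domains, the decisive facts being the defining properties $(A-\lambda)D_\lambda^{A,L}=0$ and $LD_\lambda^{A,L}=I_{\partial X}$ of the Dirichlet solution operator, together with $D(A_0)=D(A)\cap\ker(L)$. Given $\binom{v}{z}\in D(A_0)\times D(\tilde B)$, one has $\mathcal M_\lambda^{-1}\binom{v}{z}=\binom{v+D_\lambda^{A,L}z}{z}$; here $v+D_\lambda^{A,L}z\in D(A)$, $z\in D(\tilde B)$, and $L(v+D_\lambda^{A,L}z)=Lv+z=z$ since $Lv=0$, so this element lies in $D(\tilde{\mathcal A})$. Conversely, for $\binom{u}{w}\in D(\tilde{\mathcal A})$ (so that $Lu=w$) the image $\mathcal M_\lambda\binom{u}{w}=\binom{u-D_\lambda^{A,L}w}{w}$ satisfies $L(u-D_\lambda^{A,L}w)=w-w=0$, whence $u-D_\lambda^{A,L}w\in D(A_0)$. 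Thus $\mathcal M_\lambda$ is a bijection from the coupled domain $D(\tilde{\mathcal A})$ onto the diagonal domain $D(A_0)\times D(\tilde B)$.

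It then remains to compute $\mathcal M_\lambda(\tilde{\mathcal A}-\lambda)\mathcal M_\lambda^{-1}$ on $D(A_0)\times D(\tilde B)$. Applying $\tilde{\mathcal A}-\lambda$ to $\binom{v+D_\lambda^{A,L}z}{z}$, the top component collapses to $(A_0-\lambda)v$, because $(A-\lambda)D_\lambda^{A,L}z=0$ and $Av=A_0v$, while the bottom component becomes $Bv+(\tilde B+BD_\lambda^{A,L}-\lambda)z$. A final application of $\mathcal M_\lambda$ subtracts $D_\lambda^{A,L}$ times the bottom component from the top, producing precisely the entries of $\mathcal A_\lambda$ in~\eqref{simileq}. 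Here I would note that $BD_\lambda^{A,L}\in\mathcal L(\partial X)$, as observed after Lemma~\ref{dirichdef}, so the lower-right entry $\tilde B+BD_\lambda^{A,L}-\lambda$ is a bounded perturbation of $\tilde B$ and retains domain $D(\tilde B)$, consistent with the asserted diagonal domain.

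The computation itself is routine; the genuine care lies entirely in the domain bookkeeping, namely confirming that the formal matrix conjugation respects the unbounded domains and that $\mathcal M_\lambda$ carries the coupled domain bijectively onto the diagonal one. The sharper assertion that $\mathcal M_\lambda$ is an isomorphism of $\mathcal Y$, and not merely of $\mathcal X$ — which is what will make this lemma usable for later phase-space arguments through Lemma~\ref{similcos} — rests on the regularizing mapping property $D_\lambda^{A,L}\in\mathcal L(\partial X,Y)$ furnished by Lemma~\ref{dirichdef}, and this is the step I would verify most carefully.
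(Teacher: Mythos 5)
Your proposal is correct and is essentially the paper's own argument: the paper dispatches this lemma with the single remark that ``a direct matrix computation'' based on Lemma~\ref{dirichdef} (and the cited references) yields the result, and your write-up simply carries out that computation in full, with the domain bookkeeping via $(A-\lambda)D_\lambda^{A,L}=0$, $LD_\lambda^{A,L}=I_{\partial X}$, and the isomorphism property of $\mathcal M_\lambda$ on $\mathcal Y$ coming from $D_\lambda^{A,L}\in\mathcal L(\partial X,Y)$ all handled correctly.
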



\begin{theo}\label{main1} 
Under the Assumptions~\ref{assdyn}, the operator matrix $\tilde{\mathcal A}$ defined in~\eqref{acorsivo} generates a COF with associated phase space $\mathcal{Y}\times\mathcal{X}$ if and only if ${A}_0$ and $\tilde B$ generate COFs with associated phase space $Y\times X$ and $\partial Y\times\partial X$, respectively. In this case, $(S(t,\mathcal{A}))_{t\in\mathbb R}$ is compact if and only if the embeddings $[D(A_0)]\hookrightarrow X$ and $[D(\tilde{B})]\hookrightarrow \partial X$ are both compact.
\end{theo}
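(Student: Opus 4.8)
The plan is to reduce Theorem~\ref{main1} to the already-established Proposition~\ref{na89rev} (and Remark~\ref{na89revv}) by exploiting the similarity provided by Lemma~\ref{simil}. The first step is to fix some $\lambda\in\rho(A_0)$ and recall that $\tilde{\mathcal A}-\lambda$ is similar, via the isomorphism $\mathcal M_\lambda$, to the operator matrix $\mathcal A_\lambda$ of~\eqref{simileq}, which has the convenient \emph{diagonal} domain $D(A_0)\times D(\tilde B)$. Since $\mathcal M_\lambda$ is an isomorphism both on $\mathcal Y=Y\times\partial Y$ and on $\mathcal X=X\times\partial X$, Lemma~\ref{similcos} guarantees that $\tilde{\mathcal A}-\lambda$ generates a COF with associated phase space $\mathcal Y\times\mathcal X$ if and only if $\mathcal A_\lambda$ does. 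Moreover, generation of a COF is insensitive to bounded (in particular scalar) additive perturbations by Lemma~\ref{pert}, so passing from $\tilde{\mathcal A}-\lambda$ back to $\tilde{\mathcal A}$ does not alter either the COF property or the phase space. Thus it suffices to prove the equivalence for $\mathcal A_\lambda$.

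Next I would identify the diagonal and off-diagonal entries of $\mathcal A_\lambda$ in the language of Proposition~\ref{na89rev}. The diagonal entries are $A_0-D_\lambda^{A,L}B-\lambda$ on $X$ and $\tilde B+BD_\lambda^{A,L}-\lambda$ on $\partial X$. By Lemma~\ref{dirichdef} the operator $D_\lambda^{A,L}$ is bounded from $\partial X$ into $Y$ and into $[D(A)_L]$; combined with Assumption~\ref{assdyn}.(7) on $B$, this shows that $D_\lambda^{A,L}B$ is a \emph{bounded} perturbation in the sense required by Lemma~\ref{pert} (either $[D(A_0)]\to Y$ or $Y\to X$), and likewise $BD_\lambda^{A,L}\in\mathcal L(\partial X)$ is bounded. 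Hence the diagonal entry $A_0-D_\lambda^{A,L}B-\lambda$ generates a COF with phase space $Y\times X$ if and only if $A_0$ does, and $\tilde B+BD_\lambda^{A,L}-\lambda$ generates a COF with phase space $\partial Y\times\partial X$ if and only if $\tilde B$ does. The off-diagonal entries are $-D_\lambda^{A,L}(\tilde B+BD_\lambda^{A,L}-\lambda)$ in the upper-right corner and $B$ in the lower-left corner.

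The crux is to verify that these two off-diagonal operators satisfy the boundedness hypotheses of Proposition~\ref{na89rev}. For the lower-left entry $B$ this is immediate from Assumption~\ref{assdyn}.(7), which states precisely that $B$ is bounded from $[D(A_0)]$ to $\partial Y$ or from $Y$ to $\partial X$. The upper-right entry requires slightly more care: I would write $-D_\lambda^{A,L}(\tilde B+BD_\lambda^{A,L}-\lambda)$ as $D_\lambda^{A,L}$ composed with the diagonal operator $\tilde B+BD_\lambda^{A,L}-\lambda$, and observe that on $[D(\tilde B)]$ this composite lands (via the mapping properties of $D_\lambda^{A,L}$ from Lemma~\ref{dirichdef}) boundedly into $Y$, so that it is bounded from $[D(\tilde B)]$ to $Y\hookrightarrow X$, the required hypothesis for the upper-right corner. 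I expect this mapping-property check for the upper-right entry to be the main obstacle, since it is where the regularity of the Dirichlet solution operator genuinely enters and where one must be careful that the range of $\tilde B+BD_\lambda^{A,L}-\lambda$ lies in the domain of applicability of $D_\lambda^{A,L}$.

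With both off-diagonal operators shown to satisfy the hypotheses, Proposition~\ref{na89rev} applies directly to $\mathcal A_\lambda$ and yields the desired equivalence together with the phase space $\mathcal Y\times\mathcal X=(Y\times\partial Y)\times(X\times\partial X)$. Finally, for the compactness statement I would again invoke Proposition~\ref{na89rev}, whose compactness criterion asserts that $(S(t,\mathcal A_\lambda))_{t\in\mathbb R}$ is compact precisely when the embeddings $[D(A_0)]\hookrightarrow X$ and $[D(\tilde B)]\hookrightarrow\partial X$ are both compact; since compactness of the sine operator function is preserved under the similarity transformation $\mathcal M_\lambda$ and the bounded perturbation by $\lambda$, the same characterisation transfers to $\tilde{\mathcal A}$, completing the proof.
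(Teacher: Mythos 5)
Your overall strategy coincides with the paper's: fix $\lambda\in\rho(A_0)$, use Lemma~\ref{simil} and Lemma~\ref{similcos} to replace $\tilde{\mathcal A}$ by the diagonally-domained matrix ${\mathcal A}_\lambda$ of~\eqref{simileq}, and then reduce to Proposition~\ref{na89rev} by means of Lemma~\ref{pert}. Where you differ is the decomposition: you apply Proposition~\ref{na89rev} \emph{directly} to ${\mathcal A}_\lambda$, taking as diagonal entries the perturbed operators $A_0-D_\lambda^{A,L}B-\lambda$ and $\tilde B+BD_\lambda^{A,L}-\lambda$, as lower-left entry $K=B$, and as upper-right entry $H=-D_\lambda^{A,L}(\tilde B+BD_\lambda^{A,L}-\lambda)$, and you then relate the perturbed diagonal entries back to $A_0$ and $\tilde B$ by scalar applications of Lemma~\ref{pert}. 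The paper instead peels off \emph{all} $B$-dependent and scalar terms at the matrix level first, writing ${\mathcal A}_\lambda$ as the upper-triangular matrix $\left(\begin{smallmatrix} A_0 & -D_\lambda^{A,L}\tilde B\\ 0 & \tilde B\end{smallmatrix}\right)$ plus perturbations that are bounded from $[D({\mathcal A}_\lambda)]$ to $\mathcal Y$ or from $\mathcal Y$ to $\mathcal X$, and only then invokes Proposition~\ref{na89rev}, with the \emph{original} operators $A_0$, $\tilde B$ on the diagonal and $K=0$. Your verification of the hypothesis for $H$ (composing $D_\lambda^{A,L}\in{\mathcal L}(\partial X,Y)$ from Lemma~\ref{dirichdef} with $\tilde B+BD_\lambda^{A,L}-\lambda\in{\mathcal L}([D(\tilde B)],\partial X)$) is correct and is the same regularity observation the paper makes for $-D_\lambda^{A,L}\tilde B$.

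The price of your decomposition is a technical obligation that the paper's ordering is designed to avoid, and which you do not discharge. Proposition~\ref{na89rev} carries the standing hypothesis that the diagonal entries be \emph{closed} operators, and its boundedness hypotheses are formulated in terms of \emph{their} graph norms. In the backward implication (assuming ${\mathcal A}_\lambda$ generates a COF, deduce that $A_0$ and $\tilde B$ do), closedness of $A_0-D_\lambda^{A,L}B-\lambda$ is not known a priori: in either case of Assumption~\ref{assdyn}.(7) the perturbation $D_\lambda^{A,L}B$ is only relatively $A_0$-bounded (bounded from $Y$, respectively from $[D(A_0)]$, to $X$, but not on $X$), and a relatively bounded perturbation of a closed operator need not be closed --- one learns closedness only \emph{after} knowing the perturbed operator generates, which is precisely what you are trying to prove, so citing Proposition~\ref{na89rev} as a black box here is circular. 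For the same reason, converting Assumption~\ref{assdyn}.(7) into the hypothesis on $K=B$, and running Lemma~\ref{pert} ``backwards'' on the diagonal, requires identifying $[D(A_0-D_\lambda^{A,L}B-\lambda)]$ with $[D(A_0)]$, an open-mapping argument that again presupposes closedness of both operators. The remedy is exactly the paper's decomposition: perturb at the level of the full matrix, where the graph-norm comparison involves only the closed operators ${\mathcal A}_\lambda$, $A_0$, $\tilde B$, so that Proposition~\ref{na89rev} is only ever applied to a triangular matrix with closed, unperturbed diagonal entries. Your forward implication and the concluding compactness transfer (via similarity and the compact-resolvent characterisation) are fine as written.
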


\begin{proof}
Take $\lambda\in\rho(A_0)$. By Lemma~\ref{simil} the operator matrix $\tilde{\mathcal A}-\lambda$ is similar to ${\mathcal A}_\lambda$ defined in~\eqref{simileq}, and the similarity transformation is performed by ${\mathcal M}_\lambda$, which is an isomorphism on $\mathcal X$ as well as on the candidate Kisy\'nski space $\mathcal Y$. It follows by Lemma~\ref{similcos} that $\tilde{\mathcal A}-\lambda$, and hence $\tilde{\mathcal A}$ generates a COF with associated phase space ${\mathcal Y}\times {\mathcal X}$ if and only if the similar operator ${\mathcal A}_\lambda$ generates a COF with same associated phase space. We decompose
\begin{equation*}
{\mathcal A}_\lambda=
\begin{pmatrix}
A_0 & -D^{A,L}_\lambda \tilde{B}\\
0 & \tilde{B}\\
\end{pmatrix}+
\begin{pmatrix}
-D^{A,L}_\lambda B & 0\\
B & 0\\
\end{pmatrix}+\begin{pmatrix}
-\lambda & D^{A,L}_\lambda(\lambda-BD^{A,L}_\lambda)\\
0 & BD^{A,L}_\lambda-\lambda
\end{pmatrix}.
\end{equation*}
Since the operator matrix ${\mathcal A}_\lambda$ has diagonal domain $D({\mathcal A}_\lambda)=D(A_0)\times D(\tilde{B})$, we are now in the position to apply the results of Section~\ref{main}. One sees that the second operator on the right hand side is bounded from $[D({\mathcal A}_\lambda)]$ to $\mathcal Y$ or from $\mathcal Y$ to $\mathcal X$, while the third one is bounded on $\mathcal X$. Thus, by Lemma~\ref{pert} we conclude that $\mathcal A$ generates a COF with associated phase space ${\mathcal Y}\times {\mathcal X}$ if and only if 
$$\begin{pmatrix}
A_0 & -D_\lambda^{A,L}\tilde{B}\\
0 & \tilde{B}
\end{pmatrix}\qquad\hbox{with domain}\qquad D(A_0)\times D(\tilde{B})$$
generates a COF with phase space ${\mathcal Y}\times {\mathcal X}$. Since $D^{A,L}_\lambda \tilde{B}\in{\mathcal L}([D(\tilde{B})],Y)$, the claim follows by Proposition~\ref{na89rev}.\qed
\end{proof}

By Lemma~\ref{analy} we hence obtain the following.

\begin{cor}\label{maincor} 
Under the Assumptions~\ref{assdyn}, let ${A}_0$ and $\tilde B$ generate COFs with associated phase space $Y\times X$ and $\partial Y\times\partial X$, respectively. Then the operator matrix $\tilde{\mathcal A}$ defined in~\eqref{acorsivo} generates an analytic semigroup of angle $\frac{\pi}{2}$ in $X\times\partial X$. Further, such an analytic semigroup is compact if and only if the embeddings $[D(A_0)]\hookrightarrow X$ and $[D(\tilde{B})]\hookrightarrow \partial X$ are both compact.
\end{cor}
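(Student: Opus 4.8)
The plan is to read off both assertions from the two results that immediately precede the corollary, Theorem~\ref{main1} and Lemma~\ref{analy}, together with the resolvent characterisation of compactness. First I would invoke Theorem~\ref{main1}: since by hypothesis $A_0$ and $\tilde B$ generate COFs with associated phase spaces $Y\times X$ and $\partial Y\times\partial X$, that theorem guarantees that $\tilde{\mathcal A}$ generates a COF with associated phase space $\mathcal Y\times\mathcal X$, where $\mathcal X=X\times\partial X$. Restricting to the space on which the COF acts, $\tilde{\mathcal A}$ generates a COF on $X\times\partial X$; applying Lemma~\ref{analy} to this generator then yields at once that $\tilde{\mathcal A}$ generates an analytic semigroup of angle $\frac{\pi}{2}$ on $X\times\partial X$, which is the first claim.

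For the compactness assertion my strategy is to route both notions of compactness through the resolvent of $\tilde{\mathcal A}$. On the one hand, as already used in the proof of Theorem~\ref{formcosm} via~\cite[Proposition~2.3]{[TW77]}, the SOF $(S(t,\tilde{\mathcal A}))_{t\in\mathbb R}$ is compact if and only if $\tilde{\mathcal A}$ has compact resolvent. On the other hand, an analytic semigroup is compact if and only if its generator has compact resolvent, a standard fact for eventually norm-continuous semigroups. Hence both compactness properties are equivalent to $\tilde{\mathcal A}$ having compact resolvent, and therefore equivalent to each other. Combining this with the final statement of Theorem~\ref{main1}, which identifies compactness of the SOF with the simultaneous compactness of the embeddings $[D(A_0)]\hookrightarrow X$ and $[D(\tilde B)]\hookrightarrow\partial X$, gives the desired characterisation of when the analytic semigroup is compact.

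The only genuine step beyond quoting the preceding results is this bridge between compactness of the analytic semigroup and compactness of the SOF, and I expect it to be the main (indeed the only) obstacle; it is dispatched by observing that both are governed by the single spectral condition that $\tilde{\mathcal A}$ have compact resolvent. Everything else is a direct application of Theorem~\ref{main1} and Lemma~\ref{analy}.
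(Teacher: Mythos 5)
Your proposal is correct and follows exactly the paper's (largely implicit) argument: the paper derives the corollary directly from Theorem~\ref{main1} combined with Lemma~\ref{analy}, and the compactness equivalence is indeed meant to be bridged precisely as you do, by noting that both compactness of the SOF (via~\cite[Proposition~2.3]{[TW77]}) and compactness of the analytic (hence immediately norm-continuous) semigroup are equivalent to $\tilde{\mathcal A}$ having compact resolvent. Nothing further is needed.
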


We can now revisit a problem considered in~\cite{[CENN03]} and improve the result obtained therein.

\begin{exa}\label{cennorig}
\emph{Let $\Omega$ be a bounded open domain of ${\mathbb R}^n$ with boundary $\partial\Omega$ smooth enough, and consider the second order initial-boundary value problem}
\begin{equation}\label{cenn2}
\left\{
\begin{array}{rcll}
 \ddot{u}(t,x)&=&\Delta u(t,x), &t\in\mathbb{R},\; x\in\Omega,\\
 \ddot{w}(t,z)&=& Bu(t,z)+\Delta w(t,z), &t\in\mathbb{R},\;z\in\partial\Omega,\\
 w(t,z)&=&\frac{\partial u}{\partial \nu}(t,z), &t\in\mathbb{R},\; z\in\partial \Omega,\\
 u(0,x)&=&f(x),\qquad \dot{u}(0,x)=g(x), &x\in\Omega,\\
 w(0,z)&=&h(z), \qquad \dot{w}(0,z)=j(z), &z\in\partial\Omega.
\end{array}
\right.
\end{equation}
\emph{Set
$$X:=L^2(\Omega),\quad Y:=H^1(\Omega),\quad \partial X:=L^2(\partial \Omega),\quad\hbox{and}\quad \partial Y:=H^1(\partial\Omega).$$
Define the operators}
$$A:=\Delta,\qquad D(A):=\left\{u\in H^\frac{3}{2}(\Omega): \Delta u\in L^2(\Omega)\right\},$$
$$L:=\frac{\partial}{\partial \nu},\qquad D(L):= D(A),$$
$$\tilde{B}:=\Delta,\qquad D(\tilde{B}):=H^2(\partial\Omega),$$
\emph{i.e., $\tilde B$ is the Laplace--Beltrami operator on $\partial \Omega$.}

\emph{It has been shown in~\cite[\S~3]{[CENN03]} that $A$, $L$, and $\tilde B$ satisfy the Assumptions~\ref{assdyn}. In particular, the restriction $A_0$ of $A$ to $\ker(L)$ is the Neumann Laplacian, which generates a COF with associated phase space $H^1(\Omega)\times L^2(\Omega)$ by~\cite[Theorem~IV.5.1]{[Fa85]}. Further, the Laplace--Beltrami operator is by definition self-adjoint and dissipative on $L^2(\partial\Omega)$, and its form domain is $H^1(\partial \Omega)$. Hence $\tilde B$ generates a COF with associated phase space $H^1(\partial\Omega)\times L^2(\partial\Omega)$.}

\emph{By Theorem~\ref{main1} we conclude that the problem~\eqref{cenn2} is governed by a COF with associated phase space $(H^1(\Omega)\times H^1(\partial\Omega))\times (L^2(\Omega)\times L^2(\partial\Omega))$ whenever $B$ is a bounded operator from $H^1(\Omega)$ to $L^2(\partial\Omega)$. In other words, \eqref{cenn2} admits a unique classical solution if, in particular, $f\in H^2(\Omega)$, $g\in H^1(\Omega)$, $h\in H^2(\partial\Omega)$, and $j\in H^1(\partial\Omega)$. Finally, due to the boundednes of $\Omega$, and hence to the compactness of the embeddings $H^1(\Omega)\hookrightarrow L^2(\Omega)$ and $H^1(\partial\Omega)\hookrightarrow L^2(\partial\Omega)$, we can conclude that the SOF associated with the COF that governs~\eqref{cenn2} 
is compact.}

\emph{This also improves the result obtained for the first order case in~\cite[\S~3]{[CENN03]}.}

\end{exa}

\section{Damped problems}\label{damped}

We consider a complete second order abstract Cauchy problem
\begin{equation}\tag{${\rm cACP}^2$}
\left\{
\begin{array}{ll}
\ddot{u}(t)=Au(t)+C\dot{u}(t),\quad &t\geq 0,\\
u(0)=f,\quad\dot{u}(0)=g.&
\end{array}
\right.
\end{equation}
In the case of $C$ ``subordinated" to $A$  (i.e., when $-C$ is somehow related to a fractional power of $-A$) the well-posedness of $({\rm cACP}^2)$ has been discussed, among others, by Fattorini in~\cite[Chapter~VIII]{[Fa85]}, by Chen--Triggiani in~\cite{[CT90]}, and by Xiao--Liang in~\cite[Chapters~4--6]{[XL98]}. 

In the overdamped case (i.e., when $C$ is ``more unbounded" than $A$) the treatment is easier and several well-posedness results have been obtained, under the essential assumption that $C$ generates a $C_0$-semigroup, in~\cite{[Ne86]},~\cite{[XL98]}, and~\cite{[EN00]}.

\bigskip
A natural step is to introduce the reduction matrix
\begin{equation}\label{acompl}
{\mathcal A}:=
\begin{pmatrix}
0 & I_{D(C)}\\
A & C
\end{pmatrix},\qquad D({\mathcal A})=D(A)\times D(C).
\end{equation}
Its generator property has already been studied, under appropriate assumptions: we refer, e.g., to~\cite[\S~5--6]{[Ne86]} and~\cite[\S~VI.3]{[EN00]}. The prototype result is in fact the following (see~\cite[Theorem~2.5.2]{[XL98]}): Let $A\in{\mathcal L}(X)$. Then $({\rm cACP}^2)$ is well-posed if and only if $C$ generates a $C_0$-semigroup on $X$.

An analogue of this can be proved in the context of COFs taking into account the results of Section~\ref{main}.

\begin{prop}
Let $C$ be a closed operator on a Banach space $X$ and let $V$ be a Banach space such that $[D(C)]\hookrightarrow V\hookrightarrow X$ and $A\in{\mathcal L}(V,X)$. Then $\mathcal A$ (with domain $V\times D(C)$) generates a COF on $V\times X$ if and only if $C$ generates a COF with associated phase space $V\times X$.
\end{prop}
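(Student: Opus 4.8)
The plan is to realise $\mathcal A$ as a special instance of the operator matrix treated in Proposition~\ref{na89rev} and to invoke that characterisation directly. Concretely, I would apply Proposition~\ref{na89rev} on the product space $V\times X$ with the \emph{zero} operator $0\in\mathcal L(V)$ in the upper-left corner, the given operator $C$ in the lower-right corner, the upper-right entry $I_{D(C)}$ and the lower-left entry $A$. For the two intermediate spaces required there I would choose $V$ on both rows, which is legitimate since $[D(0)]=V\hookrightarrow V\hookrightarrow V$ and, by hypothesis, $[D(C)]\hookrightarrow V\hookrightarrow X$. With these choices the diagonal domain becomes $D(0)\times D(C)=V\times D(C)$, exactly the domain prescribed for $\mathcal A$.

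First I would record that the zero operator on $V$ generates a COF with associated phase space $V\times V$. This is immediate, since $C(t,0)=I$ and $S(t,0)=tI$: because $0$ is bounded one has $[D(0)]=V$, so $V$ is the Kisy\'nski space, and the corresponding reduction matrix generates the polynomially bounded $C_0$-semigroup $\left(\begin{smallmatrix} I & tI\\ 0 & I\end{smallmatrix}\right)$ on $V\times V$ in the sense of Lemma~\ref{wellp2char}.

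Next I would verify the two boundedness requirements that Proposition~\ref{na89rev} imposes on the off-diagonal entries. The upper-right entry is nothing but the embedding $I\colon[D(C)]\to V$, which is bounded precisely because $[D(C)]\hookrightarrow V$; this realises the first of the two alternatives allowed for that entry. The lower-left entry is $A\in\mathcal L(V,X)$ by hypothesis, which realises the second alternative (boundedness from the first Kisy\'nski space $V$ into $X$). Both hypotheses of Proposition~\ref{na89rev} being met, it follows that $\mathcal A$ generates a COF with associated phase space $(V\times V)\times(V\times X)$ if and only if the zero operator and $C$ generate COFs with associated phase spaces $V\times V$ and $V\times X$, respectively. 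Since the former always holds by the previous step, the equivalence collapses to the asserted one.

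I do not anticipate a genuine obstacle here: the whole content is the correct matching of the four spaces, together with the observation that the formally unbounded upper-right identity is in reality the bounded embedding $[D(C)]\hookrightarrow V$. The only point deserving a line of its own is the elementary verification that the zero operator generates a COF, which is what lets the two-sided condition of Proposition~\ref{na89rev} reduce to a condition on $C$ alone.
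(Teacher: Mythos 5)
Your proposal is correct and follows essentially the same route as the paper: the paper also views $\mathcal A$ as the matrix of Proposition~\ref{na89rev} (in the bounded--upper-left-corner form recorded in Remark~\ref{na89revv}) with the zero operator on $V$ in the first diagonal slot, notes $I_{D(C)}\in\mathcal L([D(C)],V)$ and $A\in\mathcal L(V,X)$, and concludes the equivalence with phase space $(V\times V)\times(V\times X)$. Your write-up merely makes explicit the (correct) verification that $0$ generates a COF with phase space $V\times V$, which the paper asserts without detail.
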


\begin{proof}
We can regard $0$ as a generates a COF with associated phase space $V\times V$. Since $A\in{\mathcal L}(V,X)$ and of course $I_{D(C)}\in{\mathcal L}([D(C)],V)$, it follows from Remark~\ref{na89revv} that $C$ generate a COF with associated phase space $V\times X$ if and only if $\mathcal A$ generates a COF with associated phase space $\left(V\times V\right)\times \left(V\times X\right)$.\qed
\end{proof}

We can now state the following result on very strongly damped wave equations. It generalizes the above mentioned~\cite[Theorem~2.5.2]{[XL98]} because we do not assume $A$ to be bounded on $X$.

\begin{theo}\label{corcompl}
Let $C$ generate a COF with associated phase space $V\times X$. If $A\in{\mathcal L}(V,X)$, then the operator matrix $\mathcal A$ (with domain $V\times D(C)$) defined in~\eqref{acompl} generates an analytic semigroup of angle $\frac{\pi}{2}$ on $V\times X$. 

In particular, $({\rm cACP}^2)$ is well-posed, and in fact it admits a unique classical solution $u$ for all initial data $f\in V$, $g\in X$. 
\end{theo}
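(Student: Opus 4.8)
The plan is to read this theorem off from the Proposition that immediately precedes it, together with Lemma~\ref{analy}, and then to harvest the regularising effect of analyticity. First I would observe that the hypotheses here are exactly those of the preceding Proposition: $C$ generates a COF with associated phase space $V\times X$, the space $V$ satisfies $[D(C)]\hookrightarrow V\hookrightarrow X$, and $A\in{\mathcal L}(V,X)$. That Proposition therefore applies verbatim and tells us that the reduction matrix ${\mathcal A}$ of~\eqref{acompl}, equipped with the domain $V\times D(C)$, generates a COF on $V\times X$ (with associated phase space $(V\times V)\times(V\times X)$).

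Once ${\mathcal A}$ is known to generate a COF on $V\times X$, the first assertion is immediate from Lemma~\ref{analy} applied to the generator ${\mathcal A}$ on the Banach space $V\times X$: every generator of a COF also generates an analytic semigroup of angle $\frac{\pi}{2}$, so $(e^{t{\mathcal A}})_{t\geq 0}$ is analytic of angle $\frac{\pi}{2}$ on $V\times X$. For the well-posedness statement I would recall that ${\mathcal A}$ is by construction the first-order reduction of the complete equation $\ddot u=Au+C\dot u$ on the phase space $V\times X$; hence the mere fact that ${\mathcal A}$ generates a $C_0$-semigroup is, by definition, the well-posedness of $({\rm cACP}^2)$.

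It remains to upgrade mild solutions to classical ones for the full data range $f\in V$, $g\in X$, rather than only for data in $D({\mathcal A})=V\times D(C)$, and this is where the analyticity obtained above does the decisive work. For any $f\in V$ and $g\in X$ I would set $\binom{u(t)}{v(t)}:=e^{t{\mathcal A}}\binom{f}{g}$; since the semigroup is analytic, this orbit lies in $D({\mathcal A})$ for every $t>0$, so that $u(t)\in V$ and $v(t)\in D(C)$ there. Reading off the two components of the reduced system gives $\dot u=v$ and $\dot v=Au+Cv$, whence $\ddot u=Au+C\dot u$ in $X$, with $u(0)=f$ and $\dot u(0)=g$ by strong continuity at $t=0$. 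The only point demanding genuine care is precisely this last verification: one must check that the domain membership $e^{t{\mathcal A}}\binom{f}{g}\in V\times D(C)$ supplies exactly the regularity $u(t)\in D(A)=V$ and $\dot u(t)\in D(C)$ needed for the second-order equation to hold pointwise in $X$, so that the solution of the \emph{first-order reduction} is genuinely a classical solution of the \emph{second-order problem} $({\rm cACP}^2)$.
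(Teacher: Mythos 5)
Your proposal is correct and follows exactly the route the paper intends: the theorem is stated without an explicit proof precisely because it is the combination of the immediately preceding Proposition (giving that $\mathcal A$ with domain $V\times D(C)$ generates a COF on $V\times X$) with Lemma~\ref{analy} (upgrading this to an analytic semigroup of angle $\frac{\pi}{2}$), plus the standard smoothing effect of analytic semigroups to obtain classical solutions for all $f\in V$, $g\in X$. Your careful verification that the first-order orbit yields a genuine classical solution of the second-order problem (via $u(t)\in V=D(A)$, $\dot u(t)=v(t)\in D(C)$ for $t>0$, and continuity at $t=0$) fills in exactly the details the paper leaves implicit.
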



\begin{exa}
\emph{Consider the initial value problem 
\begin{equation}\label{compl}
\left\{
\begin{array}{ll}
\ddot{u}(t,x)= \Delta u(t,x) -\Delta^2\dot{u}(t,x), &t\geq 0,\; x\in\Omega,\\
u(t,z)=\dot{u}(t,z)=\Delta \dot{u}(t,z)=0, &t\geq0,\; z\in\partial\Omega,\\
u(0,x)=f(x),\quad \dot{u}(0,x)=g(x),\quad &x\in\Omega.
\end{array}
\right.
\end{equation}
for an overdamped wave equation on an open, bounded domain $\Omega\subset\mathbb{R}^n$ with Lipschitz boundary.}

\emph{Set 
$$V:=H^2(\Omega)\cap H^1_0(\Omega)\qquad\hbox{and}\qquad X:=L^2(\Omega).$$
Define
$$A:= \Delta,\qquad D(A):= V,$$
$$C:= -\Delta^2,\qquad D(C):=\left\{u\in H^4(\Omega)\cap H^1_0(\Omega): \Delta u_{|\partial\Omega}=0\right\},$$ 
and observe that $-C$ is the square of $A$. Then~\eqref{compl} can be written in the abstract form $(\rm{cACP}^2)$. Since the operator $C$ is self-adjoint and strictly negative, it generates a COF with associated phase space $[D(-C)^\frac{1}{2}]\times X=(H^2(\Omega)\cap H^1_0(\Omega))\times L^2(\Omega)$. Moreover, the Laplacian $A$ is bounded from $H^2(\Omega)$ to $L^2(\Omega)$, hence we conclude by Theorem~\ref{corcompl} that the operator matrix $\mathcal A$ defined as in~\eqref{acompl} generates an analytic semigroup of angle $\frac{\pi}{2}$ on $\big(H^2(\Omega)\cap H^1_0(\Omega)\big)\times L^2(\Omega)$.} 

\emph{In particular, the problem~\eqref{compl} admits a unique classical solution for all initial data $g\in L^2(\Omega)$ and $f\in H^2(\Omega)\cap H^1_0(\Omega)$: while applying~\cite[Corollary~VI.3.4]{[EN00]} to the same problem yields existence and uniqueness of a classical solution only for $f\in D(C)$, i.e., for $u(0,\cdot)$ in a class of $H^4(\Omega)$-functions.}
\end{exa}

\end{document}